\newcommand{\f}{f_{6,3}}
\renewcommand{\phi}{\varphi}
\newtheorem{theorem}{Theorem}[section]
\newtheorem{lemma}[theorem]{Lemma}
\newtheorem{prop}[theorem]{Proposition}
\newtheorem{corollary}[theorem]{Corollary}
\newtheorem{cor}[theorem]{Corollary}
\newtheorem{lem}[theorem]{Lemma}
\newtheorem{no}[theorem]{Notation}
\theoremstyle{definition}
\newtheorem{ex}[theorem]{Example}
\newtheorem{conjecture}[theorem]{Conjecture} 
\newcommand{\GL}{\mathrm{GL}}
\newcommand{\SL}{\mathrm{SL}}
\def\subsection{\@startsection{subsection}{3}%
  \z@{.5\linespacing\@plus.7\linespacing}{.1\linespacing}%
  {\normalfont\itshape}}
\title{Algebraic boundary of matrices of nonnegative rank at most three}
\author{Rob H. Eggermont, Emil Horobe\c{t} and Kaie Kubjas}
\date{\today}
\begin{document}

\maketitle

\begin{abstract}
The Zariski closure of the boundary of the set of matrices of nonnegative rank at most $3$ is reducible.  We give a minimal generating set for the ideal of each irreducible component.  In fact, this generating set
is a Gr\"obner basis with respect to the graded reverse lexicographic order. This solves a conjecture by Robeva, Sturmfels and the last author.
\end{abstract}

\section{Introduction}

The \textit{nonnegative rank} of a matrix $M \in \mathbb{R}_{\geq 0}^{m \times n}$ is the smallest $r \in \mathbb{N}$ such that there exist matrices $A \in \mathbb{R}_{\geq 0}^{m \times r}$ and $B \in \mathbb{R}_{\geq 0}^{r \times n}$ with $M=AB$.
Matrices of nonnegative rank at most $r$ form a semialgebraic set, i.e. they are defined by Boolean combinations of polynomial equations and inequalities. We denote this semialgebraic set by $\mathcal{M}_{m \times n}^r$. If a nonnegative matrix has rank $1$ or $2$, then its nonnegative rank equals its rank. In these cases, the semialgebraic set $\mathcal{M}_{m \times n}^r$ is defined by $2 \times 2$ or $3 \times 3$-minors respectively together with the nonnegativity constraints. In the first interesting case when $r=3$, a semialgebraic description is given by Robeva, Sturmfels and the last author~\cite[Theorem 4.1]{KRS}.

This description is in the parameter variables of $A$ and $B$, where $M=AB$ is any size $3$ factorization of $M$, so it is not clear from the description what (the Zariski closure of) the boundary is. Some boundary components are defined by the ideals $\langle m_{ij} \rangle$, where $1 \leq i \leq m, 1 \leq j \leq n$. We call them the trivial boundary components. We establish the following result,
previously conjectured in ~\cite{KRS}:

\begin{theorem}[\cite{KRS}, Conjecture 6.4]\label{main_theorem}
Let $m \geq 4, n \geq 3$ and consider a nontrivial irreducible component of $\overline{\partial \mathcal{M}_{m \times n}^r}$. The prime ideal of this component is minimally generated by  $\binom{m}{4} \binom{n}{4}$ quartics, namely the $4 \times 4$-minors, and either by $\binom{m}{3}$ sextics that are indexed by subsets $\{i,j,k\}$ of  $\{1,2,\ldots ,m\}$ or $\binom{n}{3}$ sextics that are indexed by subsets $\{i,j,k\}$ of $\{1,2,\ldots ,n\}$. These form a Gr\"obner basis with respect to graded reverse lexicographic order.
\end{theorem}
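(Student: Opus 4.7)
By the transposition symmetry swapping rows and columns, it suffices to handle the type of nontrivial boundary component whose ideal is expected to contain the $\binom{m}{3}$ row-indexed sextics; the column-indexed case is identical after interchanging the two factors of $M = AB$. Denote the candidate ideal by $I$.

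First, I would give a rational parametrization of each relevant irreducible component $V$ of $\overline{\partial \mathcal{M}_{m \times n}^3}$. From the semialgebraic description in \cite{KRS}, the nontrivial strata of the boundary correspond to configurations in which, for every nonnegative rank-$3$ factorization $M = AB$, some prescribed nonnegativity constraint becomes tight. Each such stratum admits an explicit parametrization by a reduced-parameter rational map whose Zariski closure is an irreducible subvariety of codimension $1$ in the rank-$\leq 3$ determinantal variety.

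Second, I would verify the containment $I \subseteq I(V)$: the $4 \times 4$ minors vanish because $V$ lies inside the rank-$\leq 3$ variety, and each row-triple sextic vanishes because restricting the parametrization to any three rows produces a $3 \times n$ matrix in the boundary stratum where $\f$ is known from \cite{KRS} to vanish.

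Third, I would prove the reverse inclusion $I(V) \subseteq I$ by showing $I$ is prime of the correct codimension. The plan is to establish the proposed Gr\"obner basis property directly via Buchberger's criterion: S-pairs among quartic minors reduce to zero by the classical determinantal Gr\"obner basis theory, and S-pairs involving the sextics are analyzed explicitly, using that the leading monomial of each $\f$ uses only variables indexed by the three rows in its triple. A base case computation at small $(m,n)$ by computer algebra would be lifted to general $(m,n)$ by induction on the number of rows and columns, with the new generators added at each step shown to preserve the Gr\"obner basis structure. The resulting initial ideal then has a combinatorial standard-monomial description whose Hilbert series matches that of the parametrization, forcing $I = I(V)$. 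Minimality is then immediate: the quartic minors have pairwise distinct leading monomials in degree $4$, and no sextic lies in the ideal generated by the quartics.

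The main obstacle is the inductive lifting. One must verify that no unexpected leading monomials arise among the row-triple sextics as $m$ and $n$ grow, and that S-pair reductions between overlapping row-triples behave compatibly with the diagonal leading terms of the minors. Setting up the base case and monomial order so that the induction proceeds cleanly will likely require careful choice of the variable ordering on the $m_{ij}$.
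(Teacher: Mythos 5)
Your proposal and the paper take genuinely different routes, and the differences matter. The paper does \emph{not} establish the generating set via Gr\"obner bases. Instead, it works entirely at the level of the parameter ring: it introduces the $\GL_3$-action $g\cdot(A,B)=(Ag^{-1},gB)$ under which $\mu$ is invariant, shows that $\overline{\GL_3\cdot(\mathcal{A}\times\mathcal{B})}$ is a hypersurface cut out by an irreducible polynomial $\f$ of bidegree $(6,3)$, and then identifies $\mu^*\mathcal{I}(X_{m,n})$ with $(\f)^{\GL_3}=(\f)\cap k[M_{m\times3}\times M_{3\times n}]^{\GL_3}$. The First Fundamental Theorems for $\GL_3$ and $\SL_3$ are then used to show that every $\GL_3$-invariant element of $(\f)$ is a combination of the $\f\det_{B,\mathbf{i}}$ with invariant coefficients, and that $\mathrm{Ker}(\mu^*)$ is generated by the $4\times4$ minors. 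This yields the generating set and, as a byproduct, primality of $\mathcal{I}(X_{m,n})=(\mu^*)^{-1}((\f))$. The Gr\"obner basis claim is handled afterwards and separately, via the equivariant Gr\"obner basis machinery of Brouwer--Draisma: a finiteness lemma reduces the check to $S$-pairs supported in a bounded window, and the whole family of statements follows from one finite computation at $(m,n)=(8,10)$. In contrast, your proposal tries to establish primality, the dimension, and the identification $I=I(V)$ all through the Gr\"obner basis plus a Hilbert-series comparison.

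There are two concrete gaps in your route. First, the ``Hilbert series matching'' step is circular as stated: to compare Hilbert series you would need an independent computation of the Hilbert series of $I(V)$ from the parametrization, and doing that rigorously essentially requires the same invariant-theoretic analysis the paper carries out (or some other substitute), so it cannot be taken for granted. Second, and more seriously, primality of $I$ does not follow from a Gr\"obner basis; initial ideals are monomial and never prime in nontrivial cases, and you give no mechanism (shellability of the initial complex, a degeneration argument, etc.) to recover primality or even irreducibility of $V(I)$. The paper gets primality for free from $\mathcal{I}(X_{m,n})=(\mu^*)^{-1}((\f))$ with $\f$ irreducible. Finally, for the Gr\"obner basis itself, your proposed induction on $m$ and $n$ is exactly the place where naive approaches stall; the Brouwer--Draisma equivariant framework replaces that induction with a clean bound on which $S$-pairs must be reduced, which is what makes the Gr\"obner basis proof tractable. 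You correctly flag the induction as the main obstacle, but the missing idea is precisely the equivariant stabilization that the paper invokes.
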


One motivation for studying the nonnegative matrix rank comes from statistics. A probability matrix of nonnegative rank $r$ records joint probabilities $\textrm{Prob}(X=i,Y=j)$ of two discrete random variables $X$ and $Y$ with $m$ and $n$ states respectively that are conditionally independent given a third discrete random variable $Z$ with $r$ states. The intersection of $\mathcal{M}_{m \times n}^r$ with the probability simplex $\Delta _{mn-1}$ is called the $r$-th mixture model, see \cite[Section 4.1]{DSS} for details. Nonnegative matrix factorizations appear also in audio processing~\cite{EM}, image compression and document analysis~\cite{LS}.

Understanding the Zariski closure of the boundary is necessary for solving optimization problems on $\mathcal{M}_{m \times n}^r$ with the certificate that we have found a global maxima. One example of such an optimization problem is the maximum likelihood estimation, i.e. given data from observations one would like to find a point in the $r$-th mixture model that maximizes the value of the likelihood function. To find the global optima, one would have to use the method of Lagrange multipliers on the Zariski closure of the semialgebraic set, its boundaries and intersections of boundaries.

The outline of this paper is the following: In Section~\ref{section:definitions} we define the topological and algebraic boundary of a semialgebraic set. In Section~\ref{section:algebraic_boundary} we find a minimal generating set of a boundary component of $\mathcal{M}_{m \times n}^3$ and in Section~\ref{section:Groebner_basis} we show that it forms a Gr\"obner basis with respect to the graded reverse lexicographic term order. In Section~\ref{section:conjectures} we state some observations and conjectures regarding the algebraic boundary of  $\mathcal{M}_{m \times n}^r$ for general $r$. Appendix~\ref{section:appendix} contains the \texttt{Macaulay2} code for the computations in Section~\ref{section:Groebner_basis}.

\textbf{Acknowledgments.} We thank Jan Draisma for providing us with theoretical insight and Robert Krone for sharing his example with us.

\section{Definitions}\label{section:definitions}
Given $k$, an infinite field, we denote the space of $m\times n$ matrices over $k$ by $M_{m\times n}$. For a fixed $r$ we will denote by $M_{m\times n}^r$ the variety of $m\times n$ matrices of rank at most $r$. Moreover we denote the usual matrix multiplication map by
\[
\mu: M_{m\times r}\times M_{r\times n} \to M_{m\times n}
\]
Then the image $\mathrm{Im}(\mu)$ is exactly $M_{m\times n}^r$. Now if we restrict the domain of $\mu$ to pairs of matrices with nonnegative entries $M_{m\times r}^{+}\times M_{r\times n}^{+}$, then the image of the restriction is the semialgebraic set $\mathcal{M}_{m\times n}^r$ of matrices with nonnegative rank at most $r$, inside the variety of matrices of rank at most $r$ (since the nonnegative rank is greater or equal to the rank). We denote its (Zariski) closure by $\overline{\mathcal{M}_{m\times n}^r}$.
We sum up our working objects in the following diagram:
\[
\mu(M_{m\times r}\times M_{r\times n})=M_{m\times n}^r\supseteq \mathcal{M}_{m\times n}^r=\mu(M_{m\times r}^{+}\times M_{r\times n}^{+}).
\]
The variety $M_{m\times n}^r$ is a subset of the topological space $k^{m\cdot n}$, so the set $\mathcal{M}_{m\times n}^r$ itself has a topological boundary inside $M_{m\times n}^r$. A matrix $M\in \mathcal{M}_{m\times n}^r$ lies on the \textit{boundary} of $\mathcal{M}_{m\times n}^r$ inside $M_{m\times n}^r$, if for any open ball $U\subseteq M_{m\times n}^r$ with $M\in U$, we have that \[U\cap \mathcal{M}_{m\times n}^r\neq U\cap M_{m\times n}^r.\] We will denote this topological boundary by $\partial(\mathcal{M}_{m\times n}^r)$. The topological boundary has a (Zariski) closure inside the variety $M_{m\times n}^r$. This closure is called the \textit{algebraic boundary} of $\mathcal{M}_{m\times n }^r$, and we denote it by $\overline{\partial(\mathcal{M}_{m\times n }^r)}$.

\section{Generators of the Ideal of an Algebraic Boundary Component}\label{section:algebraic_boundary}
Before the work of Robeva, Sturmfels and the last author \cite{KRS}, very little was known about the boundary of matrices of a given nonnegative rank. They study the algebraic boundary of $\mathcal{M}_{m\times n}^3$  for the first time and give an explicit description of the boundary. Before stating their result let us fix $r=3$ for the rest of this section, and denote the coordinates on $M_{m\times n}$ by $x_{ij}$, the coordinates on $M_{m\times 3}$ by $a_{ik}$, and the coordinates on $M_{3\times n}$ by $b_{kj}$, with $i \in \{1,\ldots,m\}$, $j \in \{1,\ldots,n\}$ and $k \in \{1,2,3\}$.

So we have that $M_{m\times n}^3$ is the image of the map $\mu$, where
\[
\mu:\ ((a_{ik}),(b_{kj}))\mapsto (x_{ij}),
\]
with $x_{ij}=\sum_{k=1,3} a_{ik}b_{kj}$, for $i \in \{1,\ldots,m\}$, $j \in \{1,\ldots,n\}$ and $k \in \{1,2,3\}$.
\begin{theorem}[\cite{KRS}, Theorem 6.1]\label{KRS:Thm5.1}
The algebraic boundary $\overline{\mathcal{M}_{m\times n}^3}$ is a reducible variety in $k^{m\cdot n}$. All irreducible components have dimension $3m+3n-10$, and their number equals
\[
mn+\frac{m(m-1)(m-2)(m+n-6)n(n-1)(n-2)}{4}.
\]
Besides the $mn$ components, defined by $\{x_{ij}=0\}$, there are
\begin{itemize}
\item[(a)] $36 {m \choose 3}{n \choose 4}$ components parametrized by $(x_{ij})=A B$, where $A$ has three zeros in distinct rows and columns, and $B$ has four zeros in three rows and distinct columns.
\item[(b)] $36 {m \choose 4}{n \choose 3}$ components parametrized by $(x_{ij})=A B$, where $A$ has four zeros in three columns and distinct rows, and $B$ has three zeros in distinct rows and columns.
\end{itemize}
\end{theorem}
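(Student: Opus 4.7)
The plan is to classify the irreducible components of $\overline{\partial \mathcal{M}_{m\times n}^3}$ by tracking zero patterns of the parametrization $\mu:(A,B)\mapsto AB$. Since $\dim M_{m\times n}^3 = 3m+3n-9$ for $m,n\geq 3$, every top-dimensional component of the algebraic boundary has dimension $3m+3n-10$. The trivial components $\{x_{ij}=0\}$ account for $mn$ of these; for the remainder, I would trace boundary points through the semialgebraic description of $\mathcal{M}_{m\times n}^3$ from \cite[Theorem 4.1]{KRS}, whose defining inequalities become sharp precisely when a nonnegative factorization $M=AB$ acquires certain zero entries.

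For each candidate zero pattern $P$ of $(A,B)$, the set $\{\mu(A,B) : A\geq 0,\,B\geq 0,\,\text{zero pattern equals } P\}$ is the image under a polynomial map of a (relatively open subset of a) linear subspace, hence its Zariski closure $Z_P$ is an irreducible subvariety of $M_{m\times n}^3$. Its dimension equals $3m+3n-|P|-3$, with the $-3$ accounting for the diagonal torus action $(A,B)\mapsto (A\Lambda,\Lambda^{-1}B)$ that preserves both the product and (for admissible patterns) the zero structure. A component of dimension $3m+3n-10$ therefore forces $|P|=7$, matching the total number of zeros in both (a) and (b). For type (a), I would then count: $\binom{m}{3}$ choices of three rows for zeros of $A$, $3!=6$ bijections placing one zero per row into distinct columns, $\binom{n}{4}$ choices of four columns for zeros of $B$, and $36$ surjective maps from these four columns onto the three rows of $B$; dividing by the $S_3$ relabeling of the three inner indices $k\in\{1,2,3\}$ yields $\frac{6\cdot 36}{6}\binom{m}{3}\binom{n}{4}=36\binom{m}{3}\binom{n}{4}$ distinct components. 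Type (b) is handled symmetrically by transposition, and summing the trivial and nontrivial counts recovers the stated formula after elementary algebraic simplification.

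The remaining work is \emph{completeness}: that every nontrivial boundary matrix admits a nonnegative factorization whose zero entries are arranged as in (a) or (b), and that no other configuration of seven zeros yields a genuinely new top-dimensional component. I would derive this from the semialgebraic conditions in \cite[Theorem 4.1]{KRS}, which detect membership in $\mathcal{M}_{m\times n}^3$ via signs of certain $3\times 3$ minors of auxiliary submatrices; a point lies on the topological boundary exactly when one of these inequalities degenerates to equality, and such a degeneration must lift to a factorization with zeros distributed as in (a) or (b). The main obstacle is this lifting step, together with checking that alternative seven-zero distributions (different row–column incidences, or unbalanced splits such as $2+5$ or $1+6$) either produce $Z_P$ of strictly smaller dimension or yield a $Z_P$ already contained in the union of the components listed, so that they contribute no extra irreducible component. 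Irreducibility, dimension, and counting are largely routine once this classification of admissible patterns is in place; it is the exhaustiveness of the list and the nondegeneracy of each $Z_P$ at the generic point that require the most care.
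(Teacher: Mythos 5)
This is Theorem 6.1 of \cite{KRS}, which the present paper cites without proof; there is no in-paper argument to compare against, so the relevant benchmark is the proof in \cite{KRS}, which rests on the nested-polytope characterization of nonnegative rank three (Lemma 2.2 of \cite{MSS} together with the critical-configuration analysis built on it, of which Proposition~\ref{prop:boundarycases} above is a summary) rather than on a direct trace through the semialgebraic description.

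On its own terms your counting is correct: $\binom{m}{3}\cdot 3!$ admissible patterns for $A$, $\binom{n}{4}\cdot 36$ for $B$, a free $S_3$-action permuting the inner index, giving $36\binom{m}{3}\binom{n}{4}$ for (a), symmetrically for (b), and the arithmetic sum matches the displayed formula. But there are genuine gaps exactly where you locate "the remaining work." First, the dimension formula $\dim Z_P = 3m+3n-|P|-3$ presupposes the stabilizer in $\GL_3$ of a generic $(A,B)$ with pattern $P$ is exactly the $3$-dimensional diagonal torus; that is true for the patterns in (a) and (b), but you would still need to rule out that some other pattern, possibly with fewer zeros and a larger stabilizer, also yields a codimension-one component. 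Second, and decisively, the proposed route of tracing boundary points through \cite[Theorem~4.1]{KRS} is not as direct as it sounds, because that semialgebraic description is stated in the parameters of a factorization $M=AB$ rather than in the entries of $M$; recovering from a boundary matrix the \emph{forced} zero pattern of its nonnegative factorizations is precisely the content of the nested-polytope analysis. It is there that one shows a rank-three matrix with no zero entries lies on $\partial\mathcal{M}_{m\times n}^3$ iff every sandwiched triangle is pinned as in Proposition~\ref{prop:boundarycases}, and it is from this pinning that the forced seven zeros in configurations (a) or (b) are read off — along with the converse, that no other seven-zero pattern arises generically. Your sketch correctly isolates this lifting-and-exhaustiveness step as the crux but supplies no substitute argument for it, so as written the proof does not close.
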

Consider the irreducible component in Theorem~\ref{KRS:Thm5.1} (b) that is exactly the closure of the image of $\mathcal{A}\times \mathcal{B}$ under the multiplication map $\mu$, where we define
\[
\mathcal{A}=\left\{\left(
           \begin{array}{ccc}
             0 & * & * \\
             0 & * & * \\
             * & 0 & * \\
             * & * & 0 \\
             * & * & * \\
             \vdots & \ddots & \vdots \\
             * & * & * \\
           \end{array}
         \right)
\in M_{m\times 3}\right\}
\]
and
\[
\mathcal{B}=\left\{\left(
           \begin{array}{cccccc}
             0 & * & * & * & \cdots & *\\
             * & 0 & * & *& \cdots & *\\
             * & * & 0 & *& \cdots & *\\
           \end{array}
         \right)
\in M_{3\times n}\right\}.
\]
Let us denote this irreducible component by $X_{m,n}:=\overline{\mu(\mathcal{A}\times\mathcal{B})}$ and its ideal by $\mathcal{I}(X_{m,n})$.
In this article we describe $\mathcal{I}(X_{m,n})$ in Theorem~\ref{theorem:generators} and Theorem~\ref{theorem:equivariant_Groebner_basis}, which together give Theorem~\ref{main_theorem}.

\subsection{A $\GL_3$-action on $\mathcal{A}\times \mathcal{B}$}
We start our investigations by dualizing $\mu$ and observing that we get the following diagram of co-multiplications

\[
\xymatrix{k[M_{m\times 3}\times M_{3\times n}] & k[M_{m\times n}\ar[l]]: \mu^*\\
\mu^* \mathcal{I}(X_{m,n}) \ar@{}[u]|-*[@]{\subseteq} & \mathcal{I}(X_{m,n}) \ar@{}[u]|-*[@]{\subseteq} \ar[l]}
\]
Here $\mu^*\mathcal{I}(X_{m,n})$ is the pullback of $\mathcal{I}(X_{m,n})$.
In what follows we aim to describe $\mathcal{I}(X_{m,n})$, using acquired knowledge about $\mu^*\mathcal{I}(X_{m,n})$.

We define the following action of $\GL_3$ on $M_{m\times3}\times M_{3\times n}$, for $g\in \GL_3$, let
\[
g\cdot(A,B)=(Ag^{-1}, gB).
\]
This action naturally induces an action on $k[M_{m\times3}\times M_{3\times n}]$, by
\[
g\cdot f(A,B)=f(g^{-1}\cdot (A,B))=f(Ag,g^{-1}B),
\] for $g\in \GL_3$ and for $f\in k[M_{m\times3}\times M_{3\times n}]$.\\

Observe that $\mu$ and $\mu^*$ are invariant maps with respect to the action defined above, since
\begin{equation}\label{rem:muGL3}
\mu(g\cdot (A,B)) = (Ag^{-1})(gB) = AB = \mu(A,B),
\end{equation}
for all $(A,B) \in M_{m\times3}\times M_{3\times n}$ and all $g \in \GL_3$.

Once we have the above defined action, it is natural to investigate the orbit of our defining set, $\mathcal{A}\times \mathcal{B}$, under this action. For this we can formulate the following proposition.
\begin{prop}\label{prop:GLABishypersurf}
The closure of the orbit of the $\GL_3$-action on the set $\mathcal{A}\times \mathcal{B}$ is a hypersurface.
\end{prop}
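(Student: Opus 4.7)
I would aim to show that $\overline{\GL_3 \cdot (\mathcal{A}\times\mathcal{B})}$ has dimension exactly $3m + 3n - 1$, one less than the ambient dimension $3(m+n)$, which would establish it as an irreducible hypersurface (irreducibility is automatic since $\GL_3 \times \mathcal{A} \times \mathcal{B}$ is irreducible). This amounts to matching upper and lower bounds, each obtained in a rather different way.

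For the upper bound I would exploit a torus symmetry of the orbit map. The diagonal torus $T \subset \GL_3$ preserves both $\mathcal{A}$ and $\mathcal{B}$: a diagonal $t = \mathrm{diag}(t_1, t_2, t_3)$ rescales the columns of $A$ and the rows of $B$ and so preserves the prescribed zero patterns. A direct check shows that the orbit map
\[
\phi\colon \GL_3 \times \mathcal{A} \times \mathcal{B} \longrightarrow M_{m\times 3} \times M_{3\times n}, \qquad (g, A, B) \mapsto (Ag^{-1}, gB),
\]
is invariant under the $T$-action $t \cdot (g, A, B) = (gt^{-1}, At^{-1}, tB)$. Because $T$ acts freely on $\GL_3$ by right translation, this action is free on the whole product, so $\phi$ factors through a quotient of dimension $\dim \GL_3 + \dim \mathcal{A} + \dim \mathcal{B} - \dim T = 9 + (3m-4) + (3n-3) - 3 = 3m + 3n - 1$. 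This bounds $\dim \mathrm{im}(\phi)$ from above.

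For the lower bound I would invoke Theorem \ref{KRS:Thm5.1}, which gives $\dim X_{m,n} = 3m + 3n - 10$. Under the hypothesis $m \geq 4$, $n \geq 3$ one has $3m + 3n - 10 > 2m + 2n - 4 = \dim M_{m\times n}^2$, so a generic point $M \in X_{m,n}$ has rank exactly $3$. Since $\mu(\mathcal{A}\times\mathcal{B})$ is constructible and dense in $X_{m,n}$, a generic such $M$ admits a factorization $(A, B) \in \mathcal{A}\times\mathcal{B}$, and the entire fibre $\mu^{-1}(M)$ is then equal to the $9$-dimensional $\GL_3$-orbit of $(A, B)$, which sits inside $\GL_3 \cdot (\mathcal{A}\times\mathcal{B})$ by $\GL_3$-invariance \eqref{rem:muGL3}. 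Sweeping $M$ over a dense open of $X_{m,n}$ exhibits a subset of the orbit of dimension $\dim X_{m,n} + 9 = 3m + 3n - 1$.

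Combining, $\dim \overline{\GL_3 \cdot (\mathcal{A}\times\mathcal{B})} = 3m + 3n - 1$, so the orbit closure is a hypersurface. The delicate step is the lower bound, where one must verify that a dense open of $X_{m,n}$ sits in the rank-$3$ locus and in the set-theoretic image $\mu(\mathcal{A}\times\mathcal{B})$, not merely in its closure, so that whole $\mu$-fibres land inside the $\GL_3$-orbit.
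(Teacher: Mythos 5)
Your proof is correct, and the upper bound is essentially the paper's argument in disguise: the paper observes that diagonal matrices in $\GL_3$ preserve $\mathcal{A}\times\mathcal{B}$, which is exactly what your free $T$-action formalizes. Where you genuinely diverge is the lower bound. The paper completes the dimension count by asserting (leaving the verification to the reader) that for generic $(A,B)\in\mathcal{A}\times\mathcal{B}$ and non-diagonal $g\in\GL_3$, the pair $g\cdot(A,B)$ leaves $\mathcal{A}\times\mathcal{B}$, so the generic fiber of the orbit map is exactly the torus, and the codimension is $7-9+3=1$. You instead import the dimension $\dim X_{m,n}=3m+3n-10$ from Theorem~\ref{KRS:Thm5.1}, observe that a generic point of $X_{m,n}$ has rank exactly $3$ (using $m+n>6$) and lies in the constructible set $\mu(\mathcal{A}\times\mathcal{B})$, and use that the $\mu$-fibre over a rank-$3$ matrix is a free $9$-dimensional $\GL_3$-orbit which, once it meets $\mathcal{A}\times\mathcal{B}$, is entirely swallowed by $\GL_3\cdot(\mathcal{A}\times\mathcal{B})$. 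Your route avoids the concrete stabilizer computation at the cost of depending on the external dimension statement in Theorem~\ref{KRS:Thm5.1}; the paper's is self-contained but buries a genuine verification in the phrase ``we can verify.'' Both are valid, and your version arguably makes the logical dependencies more explicit. One small point: you correctly note that irreducibility of the orbit closure comes for free from irreducibility of $\GL_3\times\mathcal{A}\times\mathcal{B}$, something the paper does not state in this proposition but implicitly relies on later when identifying the hypersurface with $V(\f)$ in Corollary~\ref{cor:f63vanishesonGL3AB}.
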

\begin{proof}
It suffices to show that $\GL_3 \cdot (\mathcal{A}\times \mathcal{B})$ has codimension $1$ in $M_{m\times3} \times M_{3\times n}$. Note that $\mathcal{A}\times \mathcal{B}$ has codimension $7$, and $\GL_3$ has dimension $9$.

Observe that if $g \in \GL_3$ is diagonal, it maps $\mathcal{A}\times \mathcal{B}$ to itself. On the other hand, we can verify that if $(A,B) \in \mathcal{A}\times \mathcal{B}$ is sufficiently generic, and $g \in \GL_3$ is not diagonal, then $g\cdot (A,B)$ does \emph{not} lie in $\mathcal{A}\times \mathcal{B}$. Since the diagonal matrices form a $3$-dimensional subvariety of $\GL_3$, we find that the codimension of $\GL_3 \cdot (\mathcal{A}\times \mathcal{B})$ is $7-9+3 = 1$, as was to be shown.
\end{proof}

A hypersurface is the zero set of a single polynomial. We now give an explicit construction of an irreducible polynomial that vanishes on $\GL_3 \cdot (\mathcal{A}\times \mathcal{B})$.

 First we take
 $f = (-x_{13} x_{21} + x_{11} x_{23}) (x_{13} x_{22} - x_{12} x_{23}) x_{32} x_{41} -(-x_{13} x_{21} + x_{11} x_{23}) ((x_{13} x_{22} - x_{12} x_{23}) x_{31} - (-x_{12} x_{21} + x_{11} x_{22}) x_{33}) x_{42} + (-x_{12} x_{21} +     x_{11} x_{22}) ((x_{13} x_{22} - x_{12} x_{23}) x_{31} - (-x_{12} x_{21} + x_{11} x_{22}) x_{33}) x_{43}.$\\

 Now the pull-back $\mu^*f$ factors as
 \[(b_{13} b_{22} b_{31} - b_{12} b_{23} b_{31} - b_{13} b_{21} b_{32} + b_{11} b_{23} b_{32} + b_{12} b_{21} b_{33} - b_{11} b_{22} b_{33})\f,\]
 with $\f$ a homogeneous degree $(6,3)$-polynomial in the variables $a_{i,k}$ and $b_{k,j}$ with $i \in \{1,\ldots,m\}, j \in \{1,\ldots,n\},$ and $k \in \{1,2,3\}$.

Observe that $\f$ vanishes on $\mathcal{A} \times \mathcal{B}$ and the following lemma will imply that it vanishes on $\GL_3\cdot (\mathcal{A} \times \mathcal{B})$.

\begin{lemma}\label{lem:GL3onf63} The polynomial $\f$ is $\SL_3$-invariant. Moreover, for any $g \in \GL_3$, we have $g\cdot \f = \det(g)\f$.
\end{lemma}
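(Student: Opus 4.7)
My plan is to exploit the $\GL_3$-invariance of $\mu$ from equation~\eqref{rem:muGL3} together with the factorization $\mu^* f = h \cdot \f$ stated just above the lemma, where
\[
h := b_{13} b_{22} b_{31} - b_{12} b_{23} b_{31} - b_{13} b_{21} b_{32} + b_{11} b_{23} b_{32} + b_{12} b_{21} b_{33} - b_{11} b_{22} b_{33}
\]
is the explicit cubic factor. The idea is that $\mu^* f$ is automatically $\GL_3$-invariant, while the character of $\GL_3$ on $h$ can be read off by inspection; comparing the two pins down the character on $\f$.

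First I would verify that $\mu^* f$ is $\GL_3$-invariant. For any $g \in \GL_3$ and any $(A,B) \in M_{m\times 3}\times M_{3\times n}$, the definition of the induced action on functions together with equation~\eqref{rem:muGL3} gives
\[
(g\cdot \mu^* f)(A,B) = f\bigl(\mu(Ag, g^{-1}B)\bigr) = f(\mu(A,B)) = (\mu^* f)(A,B).
\]

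Next I would compute $g \cdot h$. Writing $B_{123}$ for the submatrix of $B$ formed by its first three columns, one observes that $h = -\det(B_{123})$. Since the first three columns of $g^{-1}B$ form the matrix $g^{-1} B_{123}$, we get
\[
(g\cdot h)(A,B) = -\det(g^{-1} B_{123}) = -\det(g)^{-1}\det(B_{123}) = \det(g)^{-1}\,h(A,B),
\]
so $g \cdot h = \det(g)^{-1}\,h$.

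Finally I would combine the two calculations. Applying $g$ to the identity $\mu^* f = h \cdot \f$, and using that the action is by ring automorphisms, yields $h \cdot \f = (g\cdot h)\,(g\cdot \f) = \det(g)^{-1}\,h\cdot (g\cdot \f)$. Since $h$ is a nonzero element of the integral domain $k[M_{m\times 3}\times M_{3\times n}]$, it is not a zero-divisor, so we may cancel it to obtain $g\cdot \f = \det(g)\,\f$; specialising to $\det(g) = 1$ then gives the $\SL_3$-invariance. The only conceivable obstacle is the identification $h = -\det(B_{123})$, which is a routine visual check, so I expect the argument to be essentially mechanical once the factorization $\mu^* f = h\,\f$ is in hand.
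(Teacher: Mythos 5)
Your proposal is correct and matches the paper's own proof essentially step for step: both recognize the cubic factor as (up to sign) the determinant of the first three columns of $B$, use the $\GL_3$-invariance of $\mu^* f$ from equation~\eqref{rem:muGL3}, and cancel the determinant factor to isolate the character on $\f$. The only cosmetic difference is that you pin down the sign $h = -\det(B_{123})$ explicitly, which the paper leaves implicit since it is immaterial to the cancellation.
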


\begin{proof} Note that $D = (b_{13} b_{22} b_{31} - b_{12} b_{23} b_{31} - b_{13} b_{21} b_{32} + b_{11} b_{23} b_{32} + b_{12} b_{21} b_{33} - b_{11} b_{22} b_{33})$ is a $3 \times 3$-determinant, and hence it is $\SL_3$-invariant. Moreover, we have \[(g\cdot D)(B) = D (g^{-1}B) = \det(g)^{-1}D(B),\] for any $g \in \GL_3$ and $B\in \mathcal{B}$.

Moreover, $\mu^* f$ is non-zero and $\GL_3$-invariant, by \ref{rem:muGL3}. So we have \[D  \f = \mu^*f = g \cdot\mu^* f = (g\cdot D) (g\cdot \f) = (\det(g)^{-1} D) (g\cdot \f),\] for any $g \in \GL_3$. It follows that we must have \[g\cdot \f = \det(g)\f.\] In particular $\f$ is $\SL_3$-invariant.
\end{proof}

Since $\f$ vanishes on $\mathcal{A}\times \mathcal{B}$, we immediately have the following corollary.

\begin{corollary}\label{cor:f63vanishesonGL3AB} The ideal of the set $\GL_3 \cdot (\mathcal{A} \times \mathcal{B})$ is $(\f)$.
\end{corollary}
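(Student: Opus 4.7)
The plan is to combine the irreducibility of the orbit closure with the $\GL_3$-equivariance established in Lemma~\ref{lem:GL3onf63}. First, $\GL_3 \cdot (\mathcal{A} \times \mathcal{B})$ is the image of the irreducible variety $\GL_3 \times \mathcal{A} \times \mathcal{B}$ under a morphism, hence irreducible, and by Proposition~\ref{prop:GLABishypersurf} its closure is a hypersurface. Since $k[M_{m\times 3} \times M_{3\times n}]$ is a UFD, the prime ideal of this closure is principal, say $(p)$ with $p$ irreducible and unique up to scalar.

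Next I would verify $\f \in (p)$. By the construction preceding the corollary, $\f$ vanishes on $\mathcal{A}\times\mathcal{B}$, and by Lemma~\ref{lem:GL3onf63} its zero set is $\GL_3$-invariant: for any $g\in\GL_3$ and any $(A,B)\in\mathcal{A}\times\mathcal{B}$,
\[
\f(g\cdot(A,B)) \;=\; (g^{-1}\cdot\f)(A,B) \;=\; \det(g)^{-1}\f(A,B) \;=\; 0.
\]
Thus $\f$ vanishes on $\GL_3\cdot(\mathcal{A}\times\mathcal{B})$ and on its closure, so $p \mid \f$.

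The last step, which I expect to be the main obstacle, is to show that $\f$ is actually irreducible — equivalently, that the quotient $\f/p$ is a scalar. Writing $\f = p \cdot h$, the equivariance $g\cdot\f = \det(g)\f$ together with the $\GL_3$-invariance of $V(p)$ forces every irreducible factor of $\f$ to be a $\GL_3$-semi-invariant transforming by some character $\det^{n}$. One can then use the fact that the $\SL_3$-invariant bihomogeneous polynomials on $M_{m\times 3}\times M_{3\times n}$ are generated by the $3\times 3$-minors of $A$ (bidegree $(3,0)$, character $\det$), the $3\times 3$-minors of $B$ (bidegree $(0,3)$, character $\det^{-1}$), and the entries of $AB$ (bidegree $(1,1)$, invariant), and enumerate the bidegree-$(6,3)$ factorizations. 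Checking that none of the candidate irreducible semi-invariants divides $\f$ — for instance by specializing to matrices where the candidate vanishes but $\f$ does not — then completes the argument. If this case analysis proves cumbersome, the irreducibility of $\f$ can be verified computationally in \texttt{Macaulay2}, in the spirit of the computations used elsewhere in the paper.
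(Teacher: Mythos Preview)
Your proposal is correct and follows essentially the same route as the paper: use Proposition~\ref{prop:GLABishypersurf} to know the orbit closure is a hypersurface, use Lemma~\ref{lem:GL3onf63} to see that $\f$ vanishes on the whole orbit, and then conclude from the irreducibility of $\f$. The only difference is that the paper dispatches the irreducibility of $\f$ with a one-line ``one can easily check,'' whereas you outline a (valid but unnecessary) invariant-theoretic factorization analysis before falling back on a direct computation.
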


\begin{proof}By Proposition~\ref{prop:GLABishypersurf}, the set $\overline{\GL_3 \cdot (\mathcal{A}\times\mathcal{B})}$ is a hypersurface. By the previous lemma, the polynomial $\f$ vanishes on $\GL_3 \cdot (\mathcal{A}\times\mathcal{B})$, since for any $(A,B) \in \mathcal{A}\times\mathcal{B}$ and any $g \in \GL_3$, we have $\f(Ag^{-1},gB) = (g^{-1}\cdot\f)(A,B) = \det(g)^{-1}\f(A,B) = \det(g)^{-1} \cdot 0 = 0$. One can easily check that $\f$ is irreducible, so the set $\overline{\GL_3 \cdot (\mathcal{A}\times\mathcal{B})}$ must be the zero set of $\f$, and hence its ideal, which is the ideal of $\GL_3 \cdot (\mathcal{A}\times\mathcal{B})$ as well, must be $(\f)$.
\end{proof}

\subsection{The ideal of $X_{m,n}$}

In what follows we will relate the ideal of $\GL_3 \cdot (\mathcal{A}\times\mathcal{B})$ with the pull-back of the ideal of $X_{m,n}$. To do this we formulate two technical lemmas. The first one contains the algebraic geometric essence of the proofs which follow. The other one extracts the representation theory between the lines.

\begin{lem}\label{lem:pullbackideal} Let $S$   be a subset of $M_{m\times 3}\times M_{3\times n}$, let $Y$ be a subset of $ M_{m\times n}$, and suppose $\mu(S)$ is a Zariski dense subset of $Y$. Then $\mathcal{I}(Y) = (\mu^*)^{-1}(\mathcal{I}(S))$ and $\mu^*\mathcal{I}(Y) = \mathcal{I}(S) \cap \mathrm{Im}(\mu^*)$.
\end{lem}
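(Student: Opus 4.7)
The proof is essentially a bookkeeping exercise: everything follows by unwinding what $\mu^*$, pullback ideals, and Zariski density mean. My plan is to prove the first identity directly by a chain of equivalences, and then derive the second as a quick formal consequence.

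First I would establish the identity $\mathcal{I}(Y) = (\mu^*)^{-1}(\mathcal{I}(S))$ by showing that a polynomial $f \in k[M_{m\times n}]$ lies in either side if and only if it vanishes on $\mu(S)$. On one hand, $f \in (\mu^*)^{-1}(\mathcal{I}(S))$ means $\mu^*f \in \mathcal{I}(S)$, i.e.\ $f \circ \mu$ vanishes on $S$, which is precisely the statement that $f$ vanishes on $\mu(S)$. On the other hand, since $\mu(S) \subseteq Y$ is Zariski dense, we have $\overline{\mu(S)} = \overline{Y}$, so $f$ vanishes on $\mu(S)$ if and only if it vanishes on $\overline{Y}$, which is the definition of $f \in \mathcal{I}(Y) = \mathcal{I}(\overline{Y})$. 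Chaining these two equivalences gives the first equality.

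For the second identity, the inclusion $\mu^*\mathcal{I}(Y) \subseteq \mathcal{I}(S) \cap \mathrm{Im}(\mu^*)$ is immediate: any element of $\mu^*\mathcal{I}(Y)$ is tautologically in $\mathrm{Im}(\mu^*)$, and by the first identity it also lies in $\mathcal{I}(S)$. For the reverse inclusion, take $h \in \mathcal{I}(S) \cap \mathrm{Im}(\mu^*)$ and write $h = \mu^* f$ for some $f \in k[M_{m\times n}]$. Then $\mu^* f \in \mathcal{I}(S)$ means $f \in (\mu^*)^{-1}(\mathcal{I}(S)) = \mathcal{I}(Y)$ by the first identity, so $h = \mu^* f \in \mu^* \mathcal{I}(Y)$.

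There is no serious obstacle here; the only point requiring a moment of care is ensuring that Zariski density is used correctly to pass between vanishing on $\mu(S)$ and vanishing on $Y$. In particular the argument does not require $\mu^*$ to be injective, nor does it require $Y$ to be closed — the standard identity $\mathcal{I}(Y) = \mathcal{I}(\overline{Y})$ absorbs any such subtleties.
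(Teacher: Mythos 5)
Your proof is correct and is essentially the same unwinding-of-definitions argument as the paper's, only in the opposite order: you prove $\mathcal{I}(Y) = (\mu^*)^{-1}(\mathcal{I}(S))$ directly by characterizing both sides as the polynomials vanishing on $\mu(S)$, and then deduce $\mu^*\mathcal{I}(Y) = \mathcal{I}(S)\cap\mathrm{Im}(\mu^*)$ as a formal consequence. The paper goes the other way: it first shows $\mu^*\mathcal{I}(Y)=\mu^*\mathcal{I}(\mu(S))=\mathcal{I}(S)\cap\mathrm{Im}(\mu^*)$ and then asserts that the preimage identity ``clearly'' follows. Your ordering is in fact the tighter one. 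Passing from the first identity to the second is purely mechanical, as you show; passing from the second to the first, as the paper does, silently uses that $\ker\mu^*\subseteq\mathcal{I}(Y)$ (equivalently, that $Y$ lies in the Zariski closure of $\mathrm{Im}\,\mu$, which follows from $\mu(S)\subseteq Y$ being dense). That extra step is true and easy, but the paper does not spell it out, whereas your route avoids needing it at all.
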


\begin{proof} Since $\mu(S)$ is dense in $Y$, applying $\mu^*$ we have
\[
\mu^*(\mathcal{I}(\mu(S)))=\mu^*(\mathcal{I}(Y)).
\]
It remains to prove that $\mu^*(\mathcal{I}(\mu(S)))=\mathcal{I}(S) \cap \mathrm{Im}(\mu^*)$. For this take $f\in \mathcal{I}(\mu(S))$, so for any $(A,B)\in S$ we have that $\mu^*f(A,B)=f(\mu(A,B))=0$, hence \[\mu^*(\mathcal{I}(\mu(S)))\subseteq \mathcal{I}(S) \cap \mathrm{Im}(\mu^*).\] Conversely take $f=\mu^* f'$ in $\mathcal{I}(S) \cap \mathrm{Im}(\mu^*)$, so for any $(A,B)\in S$ we have that $0=f(A,B)=(\mu^*f')(A,B)=f'(\mu(A,B)),$ hence
\[\mu^*(\mathcal{I}(\mu(S)))\supseteq \mathcal{I}(S) \cap \mathrm{Im}(\mu^*).\]
So we find that $\mu^*(\mathcal{I}(Y))=\mathcal{I}(S) \cap \mathrm{Im}(\mu^*)$.
Clearly, this means $\mathcal{I}(Y) = (\mu^*)^{-1}(\mathcal{I}(S))$ as well.
\end{proof}

\begin{lem}\label{lem:GL3invariance} The image of $\mu^*$ is equal to $k[M_{m\times3}\times M_{3\times n}]^{\GL_3}$.
\end{lem}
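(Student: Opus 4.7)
The plan is to prove the two inclusions separately.

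The inclusion $\mathrm{Im}(\mu^*) \subseteq k[M_{m\times 3}\times M_{3\times n}]^{\GL_3}$ is immediate from the $\GL_3$-invariance of $\mu$ established in equation (\ref{rem:muGL3}). Indeed, for any $f \in k[M_{m\times n}]$ and any $g \in \GL_3$,
\[
(g\cdot \mu^* f)(A,B) = (\mu^* f)(Ag, g^{-1}B) = f\bigl(\mu(Ag, g^{-1}B)\bigr) = f(AB) = (\mu^*f)(A,B),
\]
so every pullback is $\GL_3$-invariant.

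For the reverse inclusion $k[M_{m\times 3}\times M_{3\times n}]^{\GL_3} \subseteq \mathrm{Im}(\mu^*)$, the natural route is the First Fundamental Theorem of invariant theory for the general linear group. The $\GL_3$-representation $M_{m\times 3}\times M_{3\times n}$ decomposes as $m$ copies of the dual of the standard representation (the rows of $A$, transforming as $v\mapsto vg^{-1}$) together with $n$ copies of the standard representation (the columns of $B$, transforming as $w\mapsto gw$). The FFT asserts that the ring of $\GL_3$-invariants on such a representation is generated by the natural pairings between the dual-type and standard-type factors. These pairings are precisely the entries $(AB)_{ij}=\sum_k a_{ik}b_{kj}=\mu^*(x_{ij})$, which also generate $\mathrm{Im}(\mu^*)$ as a subalgebra. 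This yields the desired inclusion.

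The main obstacle, should one wish to avoid citing the FFT, lies in this second inclusion. A self-contained approach would be to restrict to the dense open subset $U\subseteq M_{m\times 3}\times M_{3\times n}$ where $A$ has rank $3$ (nonempty since $m\geq 3$), on which $\GL_3$ acts freely and $\mu|_U$ realizes $U$ as a principal $\GL_3$-bundle over the rank-$3$ locus of $M_{m\times n}^3$. A $\GL_3$-invariant $f$ then descends on $U$ to a regular function on this rank-$3$ locus; the remaining step is to extend it to a polynomial in the $x_{ij}$, which requires normality of $M_{m\times n}^3$ together with a Hartogs-type argument across the locus of lower-rank matrices. Invoking the FFT avoids this extension step entirely and is the approach I would take.
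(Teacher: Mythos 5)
Your proof is correct and follows essentially the same route as the paper: the forward inclusion by direct use of the $\GL_3$-invariance of $\mu$, and the reverse inclusion via the First Fundamental Theorem for $\GL_3$ applied to $(V^*)^m \oplus V^n$, identifying the generating pairings with the $\mu^*(x_{ij})$. Your additional remarks on avoiding the FFT via descent along the principal $\GL_3$-bundle over the rank-$3$ locus are a reasonable sketch, but the paper simply cites the FFT as you do.
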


\begin{proof} First, observe that for any $f \in k[M_{m\times n}]$, any $(A,B) \in M_{m\times3}\times M_{3\times n}$, and any $g \in \GL_3$, we have \[g\cdot(\mu^*f)(A,B) = f(\mu(g\cdot(A,B))) = f(\mu(A,B)) = \mu^*f(A,B),\] and hence $\mathrm{Im}(\mu^*) \subseteq k[M_{m\times3}\times M_{3\times n}]^{\GL_3}$.

To prove the other inclusion, we refer to the First Fundamental theorem for $\GL_3$ (see for instance [\cite{KP}, Section 2.1 or \cite{DG}, Section 11.2.1]), which states that the $\GL_3$-invariant polynomials of $k[M_{m,3}\times M_{3,n}]$ are generated by the inner products
\[
\sum_{k=1}^3 a_{i,k}b_{k,j},
\] for all $1\leq i\leq m$ and $1\leq j\leq n$. Since these are simply the $\mu^*(x_{i,j})$, we find that $\mathrm{Im}(\mu^*) \supseteq k[M_{m\times3}\times M_{3\times n}]^{\GL_3}$, which completes the proof.
\end{proof}
Now as promised the following lemma relates $\mu^*\mathcal{I}(X_{m,n})$ with $\GL_3 \cdot (\mathcal{A}\times\mathcal{B})$. We have the following equality.

\begin{lem}~\label{lem:main}
The pull-back of the ideal $\mathcal{I}(X_{m,n})$ is exactly $(\f)^{GL_3}$.
\end{lem}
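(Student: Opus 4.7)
The plan is to combine Lemmas~\ref{lem:pullbackideal} and~\ref{lem:GL3invariance} with Corollary~\ref{cor:f63vanishesonGL3AB}. First I would apply Lemma~\ref{lem:pullbackideal} with $S=\mathcal{A}\times\mathcal{B}$ and $Y=X_{m,n}$; the defining equality $X_{m,n}=\overline{\mu(\mathcal{A}\times\mathcal{B})}$ makes $\mu(S)$ Zariski dense in $Y$, giving
\[
\mu^*\mathcal{I}(X_{m,n})=\mathcal{I}(\mathcal{A}\times\mathcal{B})\cap \mathrm{Im}(\mu^*).
\]
Lemma~\ref{lem:GL3invariance} then lets me replace $\mathrm{Im}(\mu^*)$ by the invariant ring $k[M_{m\times 3}\times M_{3\times n}]^{\GL_3}$, so the task reduces to identifying
\[
\mathcal{I}(\mathcal{A}\times\mathcal{B})\cap k[M_{m\times 3}\times M_{3\times n}]^{\GL_3}=(\f)^{\GL_3}.
\]

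For the forward inclusion I would use the standard observation that a $\GL_3$-invariant polynomial $h$ that vanishes on a set $T$ also vanishes on $\GL_3\cdot T$: indeed, for $(A,B)\in\mathcal{A}\times\mathcal{B}$ and $g\in\GL_3$, one has $h(g\cdot(A,B))=(g^{-1}\cdot h)(A,B)=h(A,B)=0$. Hence any such $h$ vanishes on $\GL_3\cdot(\mathcal{A}\times\mathcal{B})$, and by Corollary~\ref{cor:f63vanishesonGL3AB} belongs to $(\f)$; being $\GL_3$-invariant by hypothesis, it lies in $(\f)^{\GL_3}$. The reverse inclusion is essentially tautological: an element of $(\f)^{\GL_3}$ is by definition a $\GL_3$-invariant polynomial in the ideal $(\f)$, and since $\f$ vanishes on $\GL_3\cdot(\mathcal{A}\times\mathcal{B})\supseteq\mathcal{A}\times\mathcal{B}$, so does every element of $(\f)$.

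There is no serious obstacle here; the content of the lemma is precisely the repackaging of the two technical lemmas together with Corollary~\ref{cor:f63vanishesonGL3AB}. The only point that deserves care is that the ideal $(\f)$ is $\GL_3$-stable (so that the notation $(\f)^{\GL_3}$ is meaningful), which follows immediately from Lemma~\ref{lem:GL3onf63} since $g\cdot\f=\det(g)\f\in(\f)$.
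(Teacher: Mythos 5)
Your proof is correct and uses the same three ingredients (Lemma~\ref{lem:pullbackideal}, Lemma~\ref{lem:GL3invariance}, Corollary~\ref{cor:f63vanishesonGL3AB}) as the paper. The one difference is that the paper applies Lemma~\ref{lem:pullbackideal} directly to $S=\GL_3\cdot(\mathcal{A}\times\mathcal{B})$, for which Corollary~\ref{cor:f63vanishesonGL3AB} already gives $\mathcal{I}(S)=(\f)$, making the conclusion immediate; you instead apply it to $S=\mathcal{A}\times\mathcal{B}$ and then need the extra (easy) step that a $\GL_3$-invariant polynomial vanishing on $\mathcal{A}\times\mathcal{B}$ vanishes on the whole orbit, which is exactly what you supply. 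Both are fine; the paper's choice of $S$ just absorbs that step into Corollary~\ref{cor:f63vanishesonGL3AB}.
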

\begin{proof}
We have $\mu(\GL_3 \cdot (\mathcal{A} \times \mathcal{B})) = \mu(\mathcal{A}\times \mathcal{B})$ is dense in $X$. By Lemma~\ref{lem:pullbackideal} we get
\[
\mu^*\mathcal{I}(X_{m,n}) = \mathcal{I}(\GL_3 \cdot (\mathcal{A} \times \mathcal{B})) \cap \mathrm{Im}(\mu^*).
\] Then applying Corollary~\ref{cor:f63vanishesonGL3AB} for the structure of $\GL_3 \cdot (\mathcal{A} \times \mathcal{B})$ and Lemma~\ref{lem:GL3invariance} for pull-back of $\mu$, we get that
\[
\mu^*\mathcal{I}(X_{m,n}) = (\f) \cap k[M_{m\times3}\times M_{3\times n}]^{\GL_3},
\] which finishes the proof.
\end{proof}
We remark that a consequence of the above ideas is the primality of $\mathcal{I}(X_{m,n})$.
\begin{cor}~\label{cor:Iisprime} The ideal $\mathcal{I}(X_{m,n})$ is prime.
\end{cor}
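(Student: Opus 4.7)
The plan is to read off primality of $\mathcal{I}(X_{m,n})$ directly from the description of its pull-back developed above, with no further geometric or representation-theoretic input needed.

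First I would apply Lemma~\ref{lem:pullbackideal} with $S = \GL_3\cdot(\mathcal{A}\times\mathcal{B})$ and $Y = X_{m,n}$. The density hypothesis is satisfied, because $\mu(\GL_3\cdot(\mathcal{A}\times\mathcal{B})) = \mu(\mathcal{A}\times\mathcal{B})$ by the $\GL_3$-invariance of $\mu$ recorded in \ref{rem:muGL3}, and this image is Zariski dense in $X_{m,n}$ by the very definition of $X_{m,n}$. The lemma then gives
\[
\mathcal{I}(X_{m,n}) \;=\; (\mu^{*})^{-1}\bigl(\mathcal{I}(\GL_3\cdot(\mathcal{A}\times\mathcal{B}))\bigr) \;=\; (\mu^{*})^{-1}\bigl((\f)\bigr),
\]
where the second equality is Corollary~\ref{cor:f63vanishesonGL3AB}.

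Second, I would invoke the irreducibility of $\f$, which was already checked in the proof of Corollary~\ref{cor:f63vanishesonGL3AB}. This means the principal ideal $(\f) \subseteq k[M_{m\times 3}\times M_{3\times n}]$ is a prime ideal in a polynomial ring. Since the preimage of a prime ideal under any ring homomorphism is prime, the displayed identity immediately yields that $\mathcal{I}(X_{m,n})$ is prime.

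There is really no obstacle left: all of the actual content is concentrated in the irreducibility of the single polynomial $\f$ and in the identification of $\mu^{*}\mathcal{I}(X_{m,n})$ carried out in Lemma~\ref{lem:main} and its supporting lemmas, so the corollary amounts to a one-line observation assembling those results.
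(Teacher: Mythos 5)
Your proof is correct and follows essentially the same route as the paper: both establish $\mathcal{I}(X_{m,n}) = (\mu^*)^{-1}\bigl((\f)\bigr)$ via Lemma~\ref{lem:pullbackideal} and Corollary~\ref{cor:f63vanishesonGL3AB} (the paper packages this through Lemma~\ref{lem:main}), then conclude primality from the irreducibility of $\f$ and the fact that preimages of prime ideals under ring homomorphisms are prime.
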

\begin{proof} Lemma~\ref{lem:main} together with Lemma~\ref{lem:pullbackideal} implies that $\mathcal{I}(X_{m,n}) = (\mu^*)^{-1}((\f))$. But then $(\f)$ is prime, since $\f$ is irreducible. This implies that $\mathcal{I}(X_{m,n}) = (\mu^*)^{-1}((\f))$ is prime as well.
\end{proof}

We continue investigating the structure of $(\f)^{\GL_3}$. For this we introduce the following notation.
For $\mathbf{i} = (i,j,k)$ an ordered triple of elements in $\{1,\ldots,n\}$, we denote $\det_{B,\mathbf{i}} = \det \left(\begin{smallmatrix}b_{1i} & b_{1j} & b_{1k} \\ b_{2i} & b_{2j} & b_{2k} \\ b_{3i} & b_{3j} & b_{3k}\end{smallmatrix}\right)$. Analogously, for $\mathbf{i} = (i,j,k)$ an ordered triple of elements in $\{1,\ldots,m\}$, we denote $\det_{A,\mathbf{i}} = \det \left(\begin{smallmatrix}a_{i1} & a_{i2} & a_{i3} \\ a_{j1} & a_{j2} & a_{j3} \\ a_{k1} & a_{k2} & a_{k3}\end{smallmatrix}\right)$.

The following proposition is the main result of this part, describing explicitly the pull-back of $\mathcal{I}(X_{m,n})$.
\begin{prop}\label{prop:fGL3}
We have $\mu^*\mathcal{I}(X_{m,n}) = \left\{\sum_{\mathbf{i}}\f \det_{B,\mathbf{i}}h_{\mathbf{i}}: h_{\mathbf{i}} \in k[M_{m\times3}\times M_{3\times n}]^{\GL_3}\right\}$. Moreover, the $\f\det_{B,\mathbf{i}}$ are $\GL_3$-invariant. Here, $\mathbf{i}$ runs over the ordered triples of elements in $\{1,\ldots,n\}$.
\end{prop}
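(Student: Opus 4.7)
My plan is to combine Lemma~\ref{lem:main} with the First Fundamental Theorem of invariant theory for $\SL_3$ and a straightforward weight (character) argument.

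First, I would verify the easy half: each $\f\det_{B,\mathbf{i}}$ is $\GL_3$-invariant. Indeed, $\det_{B,\mathbf{i}}$ is a maximal minor of $B$, and on it $\GL_3$ acts by $g\cdot\det_{B,\mathbf{i}}=\det(g)^{-1}\det_{B,\mathbf{i}}$ (columns of $g^{-1}B$ are $g^{-1}$ times columns of $B$). Combined with Lemma~\ref{lem:GL3onf63}, we get $g\cdot(\f\det_{B,\mathbf{i}})=\det(g)\cdot\det(g)^{-1}\f\det_{B,\mathbf{i}}=\f\det_{B,\mathbf{i}}$.

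Next, by Lemma~\ref{lem:main} it suffices to show $(\f)\cap k[M_{m\times 3}\times M_{3\times n}]^{\GL_3}$ equals the claimed sum. Take $F=\f h\in(\f)^{\GL_3}$. Since $\f$ is a nonzerodivisor in the polynomial ring, invariance $g\cdot(\f h)=\f h$ combined with $g\cdot\f=\det(g)\f$ forces $g\cdot h=\det(g)^{-1}h$. Thus $h$ is $\SL_3$-invariant and is a character-$\det^{-1}$ vector for the induced $\GL_3$-action on $k[M_{m\times 3}\times M_{3\times n}]^{\SL_3}$. Conversely, any such $h$ produces an $F=\f h$ in $(\f)^{\GL_3}$. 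So the problem reduces to describing the $\det^{-1}$-isotypic component of $k[M_{m\times 3}\times M_{3\times n}]^{\SL_3}$ under the residual $\GL_3/\SL_3\cong k^\times$-action.

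For the last step I would invoke the First Fundamental Theorem for $\SL_3$ (see the references used already for Lemma~\ref{lem:GL3invariance}), which states that $k[M_{m\times 3}\times M_{3\times n}]^{\SL_3}$ is generated as a $k$-algebra by the inner products $\mu^*(x_{ij})=\sum_k a_{ik}b_{kj}$, by the maximal minors $\det_{A,\mathbf{i}}$ of $A$, and by the maximal minors $\det_{B,\mathbf{j}}$ of $B$. These generators have $\GL_3$-weights $0$, $\det$, and $\det^{-1}$ respectively. Since $k[M_{m\times 3}\times M_{3\times n}]^{\SL_3}$ decomposes as a direct sum of character-spaces under the torus $\GL_3/\SL_3$, the weight-$(-1)$ piece is spanned by monomials $\prod_s\mu^*(x_{i_sj_s})\cdot\prod_t\det_{A,\mathbf{i}_t}\cdot\prod_u\det_{B,\mathbf{j}_u}$ with exactly one more $B$-minor than $A$-minor. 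Factoring out a single $\det_{B,\mathbf{j}_u}$ from each such monomial leaves behind a product of $\mu^*(x_{ij})$'s with equally many $A$- and $B$-minors, which is $\GL_3$-invariant. Collecting yields $h=\sum_{\mathbf{j}}\det_{B,\mathbf{j}}\,h_{\mathbf{j}}$ with $h_{\mathbf{j}}\in k[M_{m\times 3}\times M_{3\times n}]^{\GL_3}$. Multiplying by $\f$ gives the desired description, which together with Step 1 completes the proof.

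The most delicate point is the appeal to the First Fundamental Theorem and the observation that the ring of $\SL_3$-invariants decomposes cleanly into $\GL_3$-character-spaces; everything else is bookkeeping. I do not expect any obstacle beyond being careful that the ``pair off one $A$-minor with one $B$-minor" reduction is a $k$-linear rewriting inside the polynomial ring and not a claim about the generating relations among invariants — we only need to write each weight-$(-1)$ monomial (not each invariant) in the desired form, which is immediate from the character count.
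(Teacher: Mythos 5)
Your proposal is correct and follows essentially the same route as the paper's own proof: reduce via Lemma~\ref{lem:main} to describing $(\f)^{\GL_3}$, use Lemma~\ref{lem:GL3onf63} to force $g\cdot h=\det(g)^{-1}h$, invoke the First Fundamental Theorem for $\SL_3$, and exploit the $\mathbb{Z}$-grading by the determinant character to factor out a single $\det_{B,\mathbf{i}}$. The only difference is cosmetic: you spell out the monomial-by-monomial factoring and the cleanness of the character decomposition, which the paper leaves slightly implicit.
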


\begin{proof} First by Lemma \ref{lem:main} we have that $\mu^*\mathcal{I}(X_{m,n})=(\f)^{GL_3}$, then we recall that, by Lemma~\ref{lem:GL3onf63},  $\f$ is $\SL_3$-invariant and that for any $g \in \GL_3$ we have $g\cdot\f = \det(g)\f$.
Therefore, any $\GL_3$-invariant element $f$ of $(\f)$, has the form
\[
f =\f h,
\] with $h$ an $\SL_3$-invariant polynomial satisfying $g\cdot h = \det(g)^{-1}h$ for any $g \in \GL_3$.

By the First Fundamental Theorem for $\SL_n$ (see for instance [\cite{KP}, Section 8.4]) we know that $h$ can be expressed in terms of the $\det_{A,\mathbf{i}}$, the $\det_{B,\mathbf{i}}$ and the scalar products $\sum_{k} a_{i,k}b_{k,j}$. Observe that $\GL_3$ acts trivially on the $\sum_{k} a_{i,k}b_{k,j}$, and acts on the $\det_{A,\mathbf{i}}$ and $\det_{B,\mathbf{i}}$ by $g\cdot \det_{A,\mathbf{i}} = \det(g) \det_{A,\mathbf{i}}$ and $g\cdot \det_{B,\mathbf{i}} = \det(g)^{-1} \det_{B,\mathbf{i}}$. The polynomial ring generated by these elements is therefore $\mathbb{Z}$-graded, where the part of degree $d$ is the part of the ring on which any $g$ acts by multiplication with $\det(g)^d$.

Since $g\cdot h = \det(g)^{-1}h$, it follows that $h$ has degree $-1$, and hence we can express it in the form \[\sum_i\mathrm{det}_{B,\mathbf{i}}\cdot h_{\mathbf{i}},\] where the $h_{\mathbf{i}}$ are of degree $0$, and hence are $\GL_3$-invariant polynomials.

Then our $f$ has the form
\begin{equation*}
f=\sum_{\mathbf{i}}(\f\mathrm{det}_{B,\mathbf{i}})\cdot h_{\mathbf{i}}\text{, with } h_{\mathbf{i}}\in k[M_{m,3}\times M_{3,n}]^{\GL_3}.
\end{equation*}

So any $f \in (\f)^{\GL_3}$ can be expressed in the desired form, and each element of this form is $\GL_3$-invariant. Moreover, the $\f\det_{B,\mathbf{i}}$ are $\GL_3$-invariant, as was to be shown.
\end{proof}

Finally we have arrived at the point to draw conclusions about the generators of $\mathcal{I}(X_{m,n})$, using the knowledge we acquired about $\mu^*\mathcal{I}(X_{m,n})$.
Take an arbitrary element $f$ of $\mathcal{I}(X_{m,n})$. By Proposition~\ref{prop:fGL3}, the polynomial $\mu^*f$ can be written as
\[
\mu^* f=\sum_i \left(\f \mathrm{det}_{B,\mathbf{i}}\right) h_{\mathbf{i}},
\] for some $h_{\mathrm{i}}\in k[M_{m,3}\times M_{3,n}]^{\GL_3}$.
For each $\mathbf{i}$, fix $f_{\mathbf{i}}$ such that $\mu^*f_{\mathbf{i}} = \f \mathrm{det}_{B,\mathbf{i}}$.

Since $k[M_{m,3}\times M_{3,n}]^{\GL_3}$ is the image of $\mu^*$ (by \ref{lem:GL3invariance}), there exist $\alpha_{\mathbf{i}}$, such that $\mu^* \alpha_{\mathbf{i}}= h_{\mathbf{i}}$ for each $\mathbf{i}$. This way finally we get that
\[
\mu^*f=\mu^*\left(\sum_{\mathbf{i}} f_{\mathbf{i}} \alpha_{\mathbf{i}} \right).
\]
And finally this reads as
\[
f-\sum_{\mathbf{i}} f_{\mathbf{i}} \alpha_{\mathbf{i}} \in \mathrm{Ker}(\mu^*).
\]
The kernel of $\mu^*$ is generated by all the $4\times 4$ determinants $\mathrm{det}_{\mathbf{j},\mathbf{k}}$ of matrices in $M_{m,n}$ (where $\mathbf{j}$, respectively $\mathbf{k}$, is an ordered $4$-tuple of elements in $\{1,\ldots,m\}$, respectively in $\{1,\ldots,n\}$, and the determinant is defined as one would expect), so we conclude that
\begin{equation}
\mathcal{I}(X_{m,n})\subseteq\left(f_{\mathbf{i}}, \mathrm{det}_{\mathbf{j},\mathbf{k}}\right)_{\mathbf{i},\mathbf{j},\mathbf{k}}.
\end{equation}\\
The other inclusion is obvious from the fact that the $f_{\mathbf{i}}$ and $\mathrm{det}_{\mathbf{j},\mathbf{k}}$ vanish on $X$. This means we have just proved the following theorem.
\begin{theorem}\label{theorem:generators}
The ideal of the variety $X_{m,n}$ is generated by degree $6$ and degree $4$ polynomials, namely
\[
\mathcal{I}(X_{m,n})=\left(f_{\mathbf{i}}, \mathrm{det}_{\mathbf{j},\mathbf{k}}\right)_{\mathbf{i},\mathbf{j},\mathbf{k}}.
\]
\end{theorem}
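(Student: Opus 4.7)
The plan is to translate the generator description of $\mu^*\mathcal{I}(X_{m,n})$ provided by Proposition~\ref{prop:fGL3} back up to $\mathcal{I}(X_{m,n})$ itself, using Lemma~\ref{lem:GL3invariance} to pass from $\GL_3$-invariants to elements of $k[M_{m\times n}]$ through $\mu^*$, and then absorbing the kernel of $\mu^*$ into the $4\times 4$ minors.

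The containment $(f_{\mathbf{i}}, \det_{\mathbf{j},\mathbf{k}}) \subseteq \mathcal{I}(X_{m,n})$ will be immediate: every $4\times 4$ minor vanishes on $M_{m\times n}^3 \supseteq X_{m,n}$, and each $f_{\mathbf{i}}$ is chosen so that $\mu^*f_{\mathbf{i}} = \f\,\det_{B,\mathbf{i}}$, which by Corollary~\ref{cor:f63vanishesonGL3AB} vanishes on $\mathcal{A}\times\mathcal{B}$; since $\mu(\mathcal{A}\times\mathcal{B})$ is Zariski dense in $X_{m,n}$, this forces $f_{\mathbf{i}}\in \mathcal{I}(X_{m,n})$.

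For the reverse inclusion I take an arbitrary $f\in \mathcal{I}(X_{m,n})$ and push it down via $\mu^*$. Proposition~\ref{prop:fGL3} then writes
\[
\mu^*f \;=\; \sum_{\mathbf{i}} \bigl(\f\,\det_{B,\mathbf{i}}\bigr)\,h_{\mathbf{i}}, \qquad h_{\mathbf{i}} \in k[M_{m\times 3}\times M_{3\times n}]^{\GL_3}.
\]
By Lemma~\ref{lem:GL3invariance} the image of $\mu^*$ is exactly this invariant ring, so each $h_{\mathbf{i}}$ lifts to some $\alpha_{\mathbf{i}}\in k[M_{m\times n}]$ with $\mu^*\alpha_{\mathbf{i}} = h_{\mathbf{i}}$. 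Using $\mu^*f_{\mathbf{i}} = \f\,\det_{B,\mathbf{i}}$, this yields
\[
\mu^*\bigl(f - \textstyle\sum_{\mathbf{i}} f_{\mathbf{i}}\alpha_{\mathbf{i}}\bigr) = 0,
\]
so the difference lies in $\ker(\mu^*)$.

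The final ingredient is the classical identification of $\ker(\mu^*)$ as the ideal generated by the $4\times 4$ minors $\det_{\mathbf{j},\mathbf{k}}$ of $(x_{ij})$, i.e.\ the defining ideal of $M_{m\times n}^3$; this contributes exactly the quartic generators. Combining gives $f \in (f_{\mathbf{i}}, \det_{\mathbf{j},\mathbf{k}})$, completing the reverse inclusion. There is essentially no new obstacle to overcome at this stage: the substantive work has been encapsulated in Proposition~\ref{prop:fGL3} (describing $(\f)^{\GL_3}$) and Lemma~\ref{lem:GL3invariance} (identifying $\mathrm{Im}(\mu^*)$ with the invariants), and the theorem is a clean packaging of those results together with the standard $4\times 4$-minor presentation of $\ker(\mu^*)$, which I will simply cite.
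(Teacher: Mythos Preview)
Your proposal is correct and follows essentially the same route as the paper: pull back an arbitrary $f\in\mathcal{I}(X_{m,n})$, invoke Proposition~\ref{prop:fGL3} to write $\mu^*f=\sum_{\mathbf{i}}(\f\det_{B,\mathbf{i}})h_{\mathbf{i}}$ with $h_{\mathbf{i}}$ invariant, lift each $h_{\mathbf{i}}$ via Lemma~\ref{lem:GL3invariance}, and absorb the resulting element of $\ker(\mu^*)$ into the $4\times 4$ minors. The paper handles the easy inclusion with a one-line remark that the $f_{\mathbf{i}}$ and $\det_{\mathbf{j},\mathbf{k}}$ vanish on $X_{m,n}$, which is the same content as your more explicit justification.
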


\section{Gr\"obner Basis}\label{section:Groebner_basis}

In this section we will show that the generators in Theorem~\ref{theorem:generators} form a Gr\"obner basis with respect to the graded reverse lexicographic term order. Let $G$ be the monoid of all maps $\pi$ from $\mathbb{N} \times \mathbb{N}$ to itself such that
\begin{itemize}
 \item $\pi(ij)=ij'$ for $i \in [4]$,

 \item $\pi(ij)=i'j$ for $j \in [3]$,

 \item $\pi$ is coordinatewise strictly increasing.
\end{itemize}
Here we slightly abuse the notation and write $ij$ for a pair $(i,j)$. Let us denote
\[k[x]=k[x_{ij}:i,j \in \mathbb{N}].\]
Then $G$ acts on $k[x]$ by $\pi\cdot x_{ij} = x_{\pi (ij)}$. By Theorem~\ref{theorem:generators}, we have \[\mathcal{I}(X_{m,n})=G\cdot \mathcal{I}(X_{4,6}) \cap k[x_{ij}: 1\leq i \leq m, 1 \leq j \leq n].\]

\begin{theorem}\label{theorem:equivariant_Groebner_basis}
The $4 \times 4$-minors and sextics indexed by $\{i,j,k\} \subset \mathbb{N}$ form an equivariant Gr\"obner basis of $G\cdot \mathcal{I}(X_{4,6})$ with respect to the graded reverse lexicographic term order. The $4 \times 4$-minors and sextics indexed by $\{i,j,k\} \subset \{1,\ldots ,n\}$ form a Gr\"obner basis of $\mathcal{I}(X_{m,n})$ with respect to the graded reverse lexicographic term order.
\end{theorem}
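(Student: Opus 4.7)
The plan is to establish the two parts in sequence: first the equivariant Gröbner basis statement in the infinite-variable ring $k[x]$, then descend to $\mathcal{I}(X_{m,n})$ by intersecting with the finite-variable subring. The main tool for the first part is the equivariant Buchberger criterion: a $G$-stable set $\mathcal{F}$ is an equivariant Gröbner basis of the $G$-invariant ideal $\langle G\cdot\mathcal{F}\rangle$ with respect to a $G$-compatible term order if and only if every S-polynomial $S(f_1, g\cdot f_2)$ with $f_1, f_2 \in \mathcal{F}$ and $g\in G$ reduces to zero modulo $G\cdot\mathcal{F}$. Because the leading monomials of the generators (the sextics and the $4\times 4$-minors) involve only finitely many variables and any overlap between two such leading monomials fits inside a bounded window of row and column indices, the a priori infinite check collapses to a finite computation in a sufficiently large but finite ambient matrix ring.

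Concretely, I would verify in $k[x_{ij}:1\le i\le 4,\,1\le j\le 6]$ that the $4\times 4$-minors and the $\binom{6}{3}$ sextics together form a Gröbner basis of $\mathcal{I}(X_{4,6})$ with respect to grevlex. By Theorem~\ref{theorem:generators} this set generates the ideal, so the only remaining point is the Gröbner basis property, which can be checked directly via \texttt{Macaulay2} (see Appendix~\ref{section:appendix}) by confirming that every S-pair reduces to zero. Combined with the equivariance observation above, every leading monomial of $G\cdot\mathcal{I}(X_{4,6})$ is a $G$-translate of a leading monomial of one of the base-case generators, which establishes the first claim.

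For the second assertion, I would invoke the identity $\mathcal{I}(X_{m,n}) = G\cdot\mathcal{I}(X_{4,6}) \cap k[x_{ij}:1\le i\le m,\,1\le j\le n]$ recorded just before the theorem, together with the compatibility of the grevlex order with restriction to the finite subring once one orders the variables with the ``active'' block $\{x_{ij}:1\le i\le m,\,1\le j\le n\}$ placed as an initial segment. Under this ordering, the members of the equivariant Gröbner basis whose leading monomials lie in the subring are precisely the $4\times 4$-minors of the $m\times n$ submatrix together with the sextics indexed by $\{i,j,k\}\subset\{1,\ldots,n\}$, and these form a Gröbner basis of the intersection ideal by a standard Gröbner-theoretic argument.

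The hard part is the finite-reduction step in the equivariant Buchberger criterion: one has to argue that no S-pair reductions carried out after a $G$-shift introduce essentially new leading monomials beyond the $G$-orbits of those observed in the base case. This requires a combinatorial analysis of the possible overlap patterns of leading monomials of two sextics, of a sextic and a $4\times 4$-minor, and of two $4\times 4$-minors, and is precisely the place where the explicit \texttt{Macaulay2} computation does the heavy lifting. Once this has been verified, the rest of the proof is a straightforward assembly of the equivariance of $\mathcal{F}$ under $G$ and the restriction argument.
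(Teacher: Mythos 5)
Your overall framework is the right one: the equivariant Buchberger criterion together with a finitization argument is exactly what the paper uses, and you have correctly identified where the difficulty lies. However, your concrete proposal contains a genuine error. You propose to verify the Gröbner basis property for $\mathcal{I}(X_{4,6})$ inside $k[x_{ij}:1\le i\le 4,\,1\le j\le 6]$ and then claim that, ``combined with the equivariance observation,'' every leading monomial of $G\cdot\mathcal{I}(X_{4,6})$ is a $G$-translate of a base-case leading monomial. That inference is false: the equivariant Buchberger criterion requires S-polynomials of \emph{shifted} pairs $(\pi_0\cdot b_0,\pi_1\cdot b_1)$ to reduce to zero, and two shifted generators can overlap in a window strictly larger than $4\times 6$. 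Checking S-pairs only in the $4\times 6$ window is not enough, and the appendix you cite in fact computes in $\mathcal{I}(X_{8,10})$, not $\mathcal{I}(X_{4,6})$ --- a mismatch your write-up never reconciles.

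The step you flag as ``the hard part'' and then leave open is precisely the content the paper's proof supplies. Using Lemma~\ref{lemma:EGB4}, one restricts to shifted pairs $(\pi_0 b_0,\pi_1 b_1)$ whose indices, projected to each coordinate, form an initial interval $\{1,\dots,k\}$; then a short case analysis bounds the window. If both $b_0,b_1$ are $4\times 4$-minors, their S-pair reduces to zero by the classical theory of determinantal ideals. If one is a sextic and the other a minor, the row indices fit in $\{1,\dots,8\}$ and the column indices in $\{1,\dots,10\}$. If both are sextics, the row indices are $\{1,2,3,4\}$ and (using that each sextic is multi-homogeneous of degree $e_1+e_2+e_3+e_i+e_j+e_k$ in the columns) the column indices fit in $\{1,\dots,9\}$. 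This is what shows the finite check can and must be carried out in $\mathcal{I}(X_{8,10})$. Without this analysis your proof does not close; with it, your outline becomes the paper's proof.
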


\begin{lemma}\label{lemma:EGB4}
For all $b_0,b_1 \in k[x_{ij}: i,j \in \mathbb{N}]$ the set $G\cdot b_0 \times G\cdot b_1$ is the union of a finite number of $G$-orbits.
\end{lemma}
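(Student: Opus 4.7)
The approach is to reduce the lemma to a finiteness statement about $G$-orbits on pairs of finite subsets of $\mathbb{N}\times\mathbb{N}$, and then invoke Dickson's lemma.

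\textbf{Step 1 (supports).} Let $S_i := \mathrm{supp}(b_i)$, a finite subset of $\mathbb{N}\times\mathbb{N}$. I claim that the map $\pi\cdot b_i \mapsto \pi(S_i)$ is a $G$-equivariant bijection from $G\cdot b_i$ onto the set $\mathcal{T}_i := G\cdot S_i$ of finite subsets. Equivariance is immediate from $\mathrm{supp}(\pi\cdot f) = \pi(\mathrm{supp}(f))$, and well-definedness rests on the following rigidity: if $\pi(S_i) = \pi'(S_i)$, then decomposing $\pi = (\sigma,\tau)$ and $\pi' = (\sigma',\tau')$ into their strictly-increasing row and column maps (with $\sigma,\sigma'$ fixing $[4]$ and $\tau,\tau'$ fixing $[3]$), one sees that $\sigma|_R = \sigma'|_R$ on the row-set $R$ of $S_i$ (both are the unique order-preserving bijection $R \to \sigma(R)$), and analogously for columns. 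Hence the lemma reduces to showing that $\mathcal{T}_0\times\mathcal{T}_1$ has finitely many orbits under the diagonal $G$-action.

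\textbf{Step 2 (combinatorial encoding plus Dickson).} I encode a pair $(T_0, T_1)\in\mathcal{T}_0\times\mathcal{T}_1$ by the colored finite subset $T_0\cup T_1\subseteq\mathbb{N}\times\mathbb{N}$, labelling each point by its membership in $\{T_0, T_1\}$; its cardinality is bounded by $|S_0|+|S_1|$. Such a colored subset is determined, up to $G$-action, by two pieces of data: first, a discrete \emph{type} --- the fixed rows $R\cap[4]$, the fixed columns $C\cap[3]$, the counts $p := |R\setminus[4]|$ and $q := |C\setminus[3]|$, and the colored incidence pattern on the ordered free rows and columns --- of which only finitely many exist; and second, a \emph{gap vector} in $\mathbb{N}_{\geq 1}^{p+q}$ giving the consecutive increments between the free rows (respectively, columns). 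Within a fixed type, the diagonal $G$-action acts coordinate-wise shift-up on the gap vector, since any strictly increasing $\sigma$ fixing $[4]$ can only preserve or enlarge the consecutive differences of the free rows. By Dickson's lemma, the set of gap vectors in $\mathbb{N}_{\geq 1}^{p+q}$ realized by $\mathcal{T}_0\times\mathcal{T}_1$ has finitely many minimal elements, whose $G$-orbits cover the type. Summing over the finitely many types yields the desired finite decomposition into $G$-orbits.

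\textbf{Main obstacle.} The hardest part is Step 2, specifically verifying that the diagonal $G$-action decomposes as \emph{type preserved, gap vector shifted up coordinate-wise}, so that Dickson's lemma can be invoked; this requires careful bookkeeping of how $\pi\in G$ acts on free versus fixed rows and columns and on the colored incidence pattern. The rigidity in Step 1 is routine once the $(\sigma,\tau)$-factorization of elements of $G$ is recognized.
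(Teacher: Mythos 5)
Your proof is correct, but it takes a genuinely different route from the paper's. The paper argues directly at the level of supports: for each pair $(b_0,b_1)$ it considers the finitely many quadruples $(S_0,S_1,T_0,T_1)$ of row/column index sets of prescribed cardinalities whose unions $S_0\cup S_1$ and $T_0\cup T_1$ form initial intervals $\{1,\ldots,k\}$, picks one representative $(\pi_0 b_0,\pi_1 b_1)$ per quadruple, and asserts that the $G$-orbits of these representatives cover $G\cdot b_0\times G\cdot b_1$. That argument is short because it needs no combinatorial lemma at all: the finiteness of quadruples is immediate from the cardinality bound $k\le|S_0|+|S_1|$, and coverage follows by normalizing any pair of supports (via the unique order-preserving bijection to an initial segment, extended to an element of $G$ fixing $[4]$ on rows and $[3]$ on columns). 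Your proof instead separates out the \emph{type} (intersection with the fixed block $[4]\times[3]$ together with the colored incidence pattern on the free part) from the \emph{gap vector}, and invokes Dickson's lemma on the filter of realized gap vectors. This buys you explicitness about the fixed-part bookkeeping that the paper's normalization step glosses over — in particular, your type data records precisely the row and column indices in $[4]$ resp.\ $[3]$ that an element of $G$ cannot move, which is exactly the constraint that could make a naive ``normalize to an initial interval'' step break down when the fixed indices of $b_0$ and $b_1$ do not union to an initial segment. The cost is that you need Dickson (the realized gap vectors need not form a principal filter when the internal gap structure of $S_0$ and $S_1$ imposes partial-sum constraints), whereas the paper avoids any well-quasi-order argument. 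Both routes rely on the same implicit structural fact about $G$, namely that each $\pi\in G$ factors as a pair $(\sigma,\tau)$ of strictly increasing maps on $\mathbb{N}$ fixing $[4]$ and $[3]$ respectively, which you use in Step 1 (rigidity: $\sigma|_R$ is the unique order-preserving bijection onto $\sigma(R)$) and the paper uses to extend partial order-isomorphisms to elements of $G$.
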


\begin{proof}
Consider all quadruples $(S_0,S_1,T_0,T_1)$ of sets $S_0,S_1,T_0,T_1 \subseteq \mathbb{N}$ with $|S_i|$ equal to the number of different first coordinates of indices of $b_i$ and $|T_i|$ equal to the number of different second coordinates of indices of $b_i$ for which $S_0 \cup S_1$ and $T_0 \cup T_1$ are intervals of the form $\{1,\ldots ,k\}$ for some $k$ (in general, $k$ is different for $S_0 \cup S_1$ and $T_0 \cup T_1$). Note that there are only finitely many such quadruples $(S_0,S_1,T_0,T_1)$. For each such quadruple, let $(\pi_0,\pi_1)$ be a pair of elements of $G$ such that projecting all indices appearing in $\pi_i(b_i)$ to the first coordinate gives $S_i$ and to the second coordinate $T_i$ (if such pair exists). Then we have
\begin{displaymath}
G\cdot b_0 \times G\cdot b_1 = \bigcup _{(S_0,S_1,T_0,T_1)} G\cdot(\pi_0 b_0,\pi_1 b_1),
\end{displaymath}
where the union is over all quadruples $(S_0,S_1,T_0,T_1)$ as above.
\end{proof}

Proof of Theorem~\ref{theorem:equivariant_Groebner_basis} follows step-by-step the proof of~\cite[Theorem~3.1]{BD11}.

\begin{proof}[Proof of Theorem~\ref{theorem:equivariant_Groebner_basis}]
Let $B$ denote the generators of the ideal $\mathcal{I}(X_{4,6})$. By the equivariant Buchberger criterion~\cite[Theorem~2.5]{BD11}, we have to show that for all $b_0,b_1 \in B$ there exists a complete set of $S$-polynomials each of which has $0$ as a $G$-reminder modulo $B$. By the proof of Lemma~\ref{lemma:EGB4}, we need only $G$-reduce modulo $B$ all $S$-polynomials of  pairs $(\pi_0 \cdot b_0,\pi_1 \cdot b_1)$ with $b_0,b_1 \in B$ and $\pi_0,\pi_1 \in G$ such that $\pi_0 \cdot b_0 \cup \pi_1 \cdot b_1$ projected to each coordinate forms an interval of the form $\{1,\ldots ,k\}$. If $b_0, b_1$ are both $4 \times 4$-minors, then $\pi_0\cdot b_0, \pi_1\cdot b_1$ are also $4 \times 4$-minors and their $S$-polynomial has $G$-remainder $0$ modulo $B$. If one of $b_0$ and $b_1$ is a sextic and the other one is a $4 \times 4$-minor, then the maximal element in $S_0 \cup S_1$ is less then or equal to $8$ and the maximal element in $T_0 \cup T_1$ is less then or equal to $10$. If $b_0, b_1$ are both sextics, then $S_0 \cup S_1=\{1,2,3,4\}$ and the maximal element in $T_0 \cup T_1$ is less then or equal to $9$, because every sextic is homogeneous of degree $e_1{+}e_2{+}e_3{+}e_i{+}e_j{+}e_k$ in the column
grading.

Finally, we use \texttt{Macaulay2} to show that the Gr\"obner basis of $\mathcal{I}(X_{8,10})$ is generated by the $4 \times 4$-minors and sextics indexed by $\{i,j,k\} \subset \{1,\ldots ,8\}$, for details see Appendix~\ref{section:appendix}.
\end{proof}

\section{Open Problems}\label{section:conjectures}
In the previous sections we have seen structure theorems for the algebraic boundary of matrices of nonnegative rank three. In
this section we investigate the algebraic boundary of matrices of arbitrary nonnegative rank $r$. Hoping for similar results as for the rank three case is an ambitious project, therefore we aim for studying the stabilization behavior of the nonnegative rank boundary.
For matrix rank is true that if the dimensions, $m$ and $n$, of a matrix $M$ are sufficiently large, then already a submatrix of $M$ has the same rank as $M$ does. We want to prove something similar for the nonnegative rank.

When letting both $m$ and $n$ tend to infinity, it is not true that the nonnegative rank of a given matrix can be tested by calculating the nonnegative rank of its submatrices. More precisely given a nonnegative matrix $M=(m_i)_{1\leq i\leq n}$ on the topological boundary $\partial (\mathcal{M}_{m\times n}^r)$ it might happen that all of the submatrices $M^{\widehat{i_0}}=(m_i)_{i=1,...,n}^{i\neq i_0}$ have smaller nonnegative rank then $M$ has (or the same for rows).

We will give a family of examples showing this. In~\cite{MOI}, Ankur Moitra gives a family of examples of $3n\times 3n$ matrices of nonnegative rank $4$ for which every $3n\times n$ submatrix has nonnegative rank $3$. We will strengthen his result to be true for every $3n \times (\lceil\frac{3}{2}n\rceil-1)$ submatrix.

To present this example we remind our readers about  the geometric approach to nonnegative rank.
Finding the nonnegative rank of a matrix  is equivalent to finding a polytope with minimal number of vertices nested between  two given polytopes. For this approach to nonnegative rank see for instance \cite[Section 2]{MSS}.
Let $M\in M_{m\times n}^r$ be a rank $r$ nonnegative matrix and let $\Delta_{m-1}=\mathbb{R}_+^m\cap H$, where
\[H=\left\{x\in\mathbb{R}^m| \sum_{i=1}^m x_i=1\right\}.\]
Then define
\[
W=\mathrm{Span}(M)\cap \Delta_{m-1}\text{  and  } V=\mathrm{Cone}(M)\cap \Delta_{m-1},
\]
where $\mathrm{Span}(M)$ and $\mathrm{Cone}(M)$ are the linear space and positive cone spanned by the column vectors of $M$. We have the following lemma.

\begin{lemma}[\cite{MSS}, Lemma 2.2]\label{lm:nonnegativerankandsimplices}
Let $\textrm{rank}(M)=r$. The matrix $M$ has nonnegative rank exactly $r$ if and only if there exists a $(r-1)$-simplex $\Delta$ such that $V\subseteq \Delta\subseteq W$.
\end{lemma}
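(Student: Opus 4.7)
The plan is to prove both directions of the biconditional by explicitly converting nonnegative factorizations $M=AB$ of minimal inner size into simplices nested between $V$ and $W$, and vice versa, with normalization to the probability simplex $\Delta_{m-1}$ serving as the translation device. Concretely, for each nonzero column $m_j$ of $M$ set $s_j=\sum_i(m_j)_i$ and observe that $m_j/s_j\in\Delta_{m-1}$; then $V$ is exactly the convex hull of these normalized columns, while $W$ is the convex polytope obtained by slicing the column span by $\Delta_{m-1}$. Any size-$r$ nonnegative factorization $M=AB$ in which the columns of $A$ are rescaled to sum to $1$ then becomes a configuration of $r$ points in $W$ whose convex hull contains $V$.

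For the ``only if'' direction, I would take a size-$r$ nonnegative factorization $M=AB$ guaranteed by the hypothesis that $M$ has nonnegative rank $r$. Since $\mathrm{rank}(M)=r$ and $M=AB$, the $r$ columns of $A$ must span the column space of $M$ and hence be linearly independent; in particular none of them is zero, so each can be normalized to lie in $\Delta_{m-1}$ by absorbing the scaling factor into the corresponding row of $B$. Linear independence together with all column sums being equal to $1$ yields affine independence, so these $r$ normalized columns are the vertices of an $(r-1)$-simplex $\Delta\subseteq W$. For every column $m_j$ of $M$ the identity $m_j=Ab_j$ with $b_j\geq 0$ becomes, after normalization, a convex combination of the vertices of $\Delta$, proving $V\subseteq\Delta\subseteq W$.

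For the ``if'' direction, I would reverse the construction. Given a simplex $\Delta\subseteq W$ with vertices $v_1,\dots,v_r$, assemble the matrix $A$ with columns $v_i$; each $v_i$ is nonnegative (being in $\Delta_{m-1}$) and lies in $\mathrm{Span}(M)$. The hypothesis $V\subseteq\Delta$ means each normalized column $m_j/s_j$ is a convex combination of the $v_i$, so multiplying by $s_j$ produces nonnegative coefficients $b_{ij}$ satisfying $m_j=\sum_i b_{ij}v_i$; this assembles into a nonnegative factorization $M=AB$ of inner size $r$. Hence the nonnegative rank of $M$ is at most $r$, and since it is always bounded below by $\mathrm{rank}(M)=r$, equality holds.

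The main obstacle is largely bookkeeping rather than any single hard idea: one has to verify that after rescaling the $r$ columns of $A$ remain affinely independent, so that $\Delta$ is a genuine $(r-1)$-simplex and not a degenerate object; that the zero columns of $M$ can be discarded without affecting either side of the equivalence; and that the coefficients $b_{ij}$ assembled in the reverse direction really form a nonnegative matrix $B$ with $AB=M$. Each of these is a short calculation but requires some care with the normalization constants.
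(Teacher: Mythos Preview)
The paper does not supply its own proof of this lemma; it is quoted from \cite{MSS} and used as a black box. Your argument is correct and is essentially the standard proof one finds in that reference: normalize columns to the probability simplex, read off the simplex $\Delta$ from the columns of $A$ in a size-$r$ nonnegative factorization, and conversely assemble $A$ from the vertices of a nested simplex. The bookkeeping points you flag (affine independence of the normalized columns of $A$, handling of zero columns, nonnegativity of the reconstructed $B$) are exactly the right ones and each is routine, so there is no genuine gap.
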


In the case of nonnegative rank $3$, Robeva, Sturmfels and the last author study the boundary of the mixture model, based on~\cite[ Lemma 3.10 and Lemma 4.3]{MSS}.

\begin{prop}[\cite{KRS}, Corollary 4.4]\label{prop:boundarycases} Let $M \in \mathcal{M}_{m\times n}^3$. Then $M \in \partial\mathcal{M}_{m\times n}^3$ if and only if
\begin{itemize}
\item $M$ has a zero entry, or
\item $\mathrm{rank}(M) = 3$ and if $\Delta$ is any triangle with $V \subseteq \Delta \subseteq W$, then every edge of $\Delta$ contains a vertex of $V$, and either an edge of $\Delta$ contains an edge of $V$, or a vertex of $\Delta$ coincides with a vertex of $W$.
\end{itemize}
\end{prop}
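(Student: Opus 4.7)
My plan is to use Lemma~\ref{lm:nonnegativerankandsimplices} to recast the statement as a question about nested simplices, then analyze perturbations in $M_{m\times n}^3$ according to whether $M$ has a zero entry or not. The easy cases are handled directly. For the zero-entry direction, if $m_{ij}=0$, I find a tangent direction to $M_{m\times n}^3$ at $M$ that strictly decreases the $(i,j)$-entry; moving along it produces matrices in $M_{m\times n}^3\setminus\mathcal{M}_{m\times n}^3$, so $M\in\partial\mathcal{M}_{m\times n}^3$. Conversely, if $M$ has strictly positive entries and $\mathrm{rank}(M)\leq 2$, then its nonnegative rank equals its rank, and any small perturbation $M'\in M_{m\times n}^3$ still has positive entries. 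If $\mathrm{rank}(M')\leq 2$, then $M'\in\mathcal{M}_{m\times n}^2\subseteq\mathcal{M}_{m\times n}^3$; if $\mathrm{rank}(M')=3$, then the nested objects $V',W'\subset\Delta_{m-1}$ are close to the original segment $V\subseteq W$, with $W'$ retaining positive transverse width (since $W$ is bounded away from the boundary of $\Delta_{m-1}$) while $V'$ is only of order $\|M'-M\|$ thick, so a nested triangle $\Delta$ with $V'\subseteq\Delta\subseteq W'$ can be constructed explicitly. Lemma~\ref{lm:nonnegativerankandsimplices} then gives $M'\in\mathcal{M}_{m\times n}^3$, so $M$ is interior.

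The main case is $\mathrm{rank}(M)=3$ with positive entries, where I claim $M$ is interior if and only if there exists a nested triangle $\Delta$ violating the stated degeneracy (some edge of $\Delta$ is disjoint from $V$, or every edge meets $V$ only at isolated interior points while every vertex of $\Delta$ lies strictly inside $W$). For the ``interior'' direction, such a strictly nested $\Delta$ represents an open condition: for any $M'$ close to $M$, the nearby $V',W'$ still admit a nearby nested triangle $\Delta'$, so $M'$ has nonnegative rank $\leq 3$ by Lemma~\ref{lm:nonnegativerankandsimplices}. For the ``boundary'' direction, assuming every nested triangle is degenerate, I construct an explicit rank-preserving perturbation of $M$ that either enlarges $V$ against a pinned edge of every admissible $\Delta$, or shrinks $W$ at every pinned vertex; after perturbation no triangle nests between $V'$ and $W'$, so by the lemma the perturbed matrix has nonnegative rank $\geq 4$ while still having rank $\leq 3$, witnessing $M\in\partial\mathcal{M}_{m\times n}^3$.

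The main obstacle is constructing this last perturbation: producing a single rank-preserving deformation of $M$ that destroys \emph{every} nested triangle at once, rather than just one. The two alternatives in the stated degeneracy --- an edge of $\Delta$ containing an edge of $V$, versus a vertex of $\Delta$ coinciding with a vertex of $W$ --- correspond to the two geometric mechanisms making the nesting rigid, and each calls for its own perturbation construction. A compactness argument on the space of triples $(V,\Delta,W)$, combined with the continuous dependence of $V$ and $W$ on the matrix entries, is used to convert ``every nested triangle is pinned in the stated way'' into ``any small matrix perturbation breaks the nesting,'' thereby exhibiting $M$ as a boundary point.
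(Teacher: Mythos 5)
The paper does not prove this proposition; it is quoted verbatim as Corollary~4.4 of \cite{KRS} and used as a black box, so there is no internal proof for your attempt to be compared against. That said, your sketch deserves scrutiny on its own terms. Conceptually you are on the right track: \cite{KRS} and the underlying analysis in \cite{MSS} do proceed via Lemma~\ref{lm:nonnegativerankandsimplices} and a perturbation/openness analysis of the space of triangles $\Delta$ with $V \subseteq \Delta \subseteq W$ (the ``space of explanations'' in \cite{MSS}). The zero-entry direction and the rank-at-most-2 case are handled correctly in outline.

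However, there is a genuine gap exactly where you flag it, and I do not think the compactness hand-wave closes it. Knowing that each individual nested triangle is ``pinned'' (touches $V$ along an edge or $W$ at a vertex) does not by itself produce \emph{one} rank-preserving perturbation of $M$ that simultaneously destroys the nesting for \emph{every} $\Delta$: different triangles may be pinned in the two different ways and impose conflicting requirements on how to move $V$ versus $W$, and a priori the family of nested triangles could be positive-dimensional, so you cannot simply treat finitely many obstructions. The actual proof in \cite{KRS}/\cite{MSS} does not attempt this directly; instead it first establishes structural facts about the space of explanations (\cite[Lemmas~3.7, 3.10, 4.3]{MSS}), in particular that on the boundary this space degenerates to finitely many isolated triangles, and deduces the stated incidence conditions from that. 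Without invoking or reproving those lemmas, the crucial ``boundary'' implication in your sketch is unsupported. A secondary, smaller issue: your restatement of the non-degeneracy condition (``some edge of $\Delta$ is disjoint from $V$, or every edge meets $V$ only at isolated interior points\dots'') is not the exact logical negation of the condition in the proposition, since an edge of $\Delta$ can meet $V$ in the relative interior of an edge of $V$ without containing a vertex of $V$, and a mixture of pinned and unpinned edges is possible; this should be stated more carefully before building the perturbation argument on it.
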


Note that all vertices of $\Delta$ in the above proposition must lie on $W$. Together with results from \cite{MSS}, one can show that if $M$ has rank $3$ and it lies in the interior of $\mathcal{M}_{m\times n}^3$ then there is a $\Delta$ with $V \subseteq \Delta \subseteq W$ such that every vertex of $\Delta$ lies on $W$, and either an edge of $\Delta$ contains an edge of $V$ or a vertex of $\Delta$ coincides with a vertex of $W$. These are the types of triangles we will be interested in.

\begin{no} Let $V \subseteq W$ be convex polygons such that $V$ is contained in the interior of $W$.
\begin{description}
\item[1] For a vertex $w$ of $W$, let $l_1,l_2$ be the rays of the minimal cone centered at $w$ and containing $V$. Let $w_i$ be the point on $l_i \cap W$ furthest away from $w$. We denote the triangle formed by $w,w_1$ and $w_2$ by $\Delta^w_{V,W}$.
\item[2] For an edge $e = (v_1,v_2)$ of $V$, consider the line $l$ containing $e$. Let $w_1,w_2$ be the points where $l$ intersects $W$. The minimal cone centered at $w_i$ containing $V$ has two rays, one of which contains $e$. Let $l_i$ be the one not containing $e$. If $l_1$ and $l_2$ intersect inside $W$, we denote the triangle formed by $w_1,w_2$ and $l_1\cap l_2$ by $\Delta^{e}_{V,W}$.
\end{description}
We omit subscripts when possible.
\end{no}

As a consequence of the discussion above, to test whether or not the pair $(W,V)$ corresponds to a matrix and its nonnegative rank $3$ factorization, it suffices to look at the triangles $\Delta^w, \Delta^e$ with $w$ running over the vertices of $W$ and $e$ running over the edges of $V$.

We are now ready to show Moitra's family of examples. For simplicity, we work with regular $3n$-gons, which is slightly more restrictive than Moitra's actual family. Regardless of this, the conclusions will hold even if we consider the full family.

\begin{ex}
Let $W$ be a regular $3n$-gon for some $n > 1$. Label the vertices $w_1,\ldots,w_{3n}$ in clockwise order. Let $V$ be the polygon cut out by the lines $l_i = w_iw_{i+n}$ for $i \in \{1,\ldots,3n\}$ (computing modulo $3n$). Note that each $l_i$ contains some edge of $V$. Since all $l_i$ are distinct, it follows that $V$ is a $3n$-gon. Observe that for any $i$, the triangle $\Delta^{w_i}$ is the triangle formed by the lines $l_i, l_{i+n}, l_{i+2n}$ (or alternatively, by the points $w_i,w_{i+n},w_{i+2n}$). Moreover, for any edge $e$ of $V$, any of the triangles $\Delta^e$ is one of the $\Delta^{w_i}$. See the left hand side of Figure~\ref{fig:moitra12} for an example.

It is now easily verified that these triangles are the only triangles $\Delta$ with $V \subseteq \Delta \subseteq W$. Indeed, Moitra showed that the pair $(W,V)$ corresponds to a matrix $M$ in $\partial\mathcal{M}_{3n\times 3n}^3$, which is equivalent to the above statement by Proposition~\ref{prop:boundarycases} (and by the fact that $V$ is contained in the interior of $W$, which implies that the corresponding matrix does not have any zero entries).

We expand $V$ to $V'$ by moving each vertex of $V$ a factor $\epsilon$ away from the center. Since any triangle containing $V'$ must also contain $V$, and since the $\Delta^{w_i}_{W,V}$ do not contain $V'$, there are no triangles $\Delta$ with $V' \subseteq \Delta \subseteq W$, and hence $(W,V')$ corresponds to a matrix $M'$ of nonnegative rank at least $4$.

We observe that if $\epsilon$ is small enough, the triangle $\Delta^{w_i}_{W,V'}$ contains all but two vertices of $V'$, namely the two vertices of $V'$ corresponding to the vertices of $V$ that lie on the line $w_{i+n}w_{i+2n}$. An example of such a triangle can be seen on the right hand side of Figure~\ref{fig:moitra12}.

\begin{figure}[h]
\includegraphics[width=12cm]{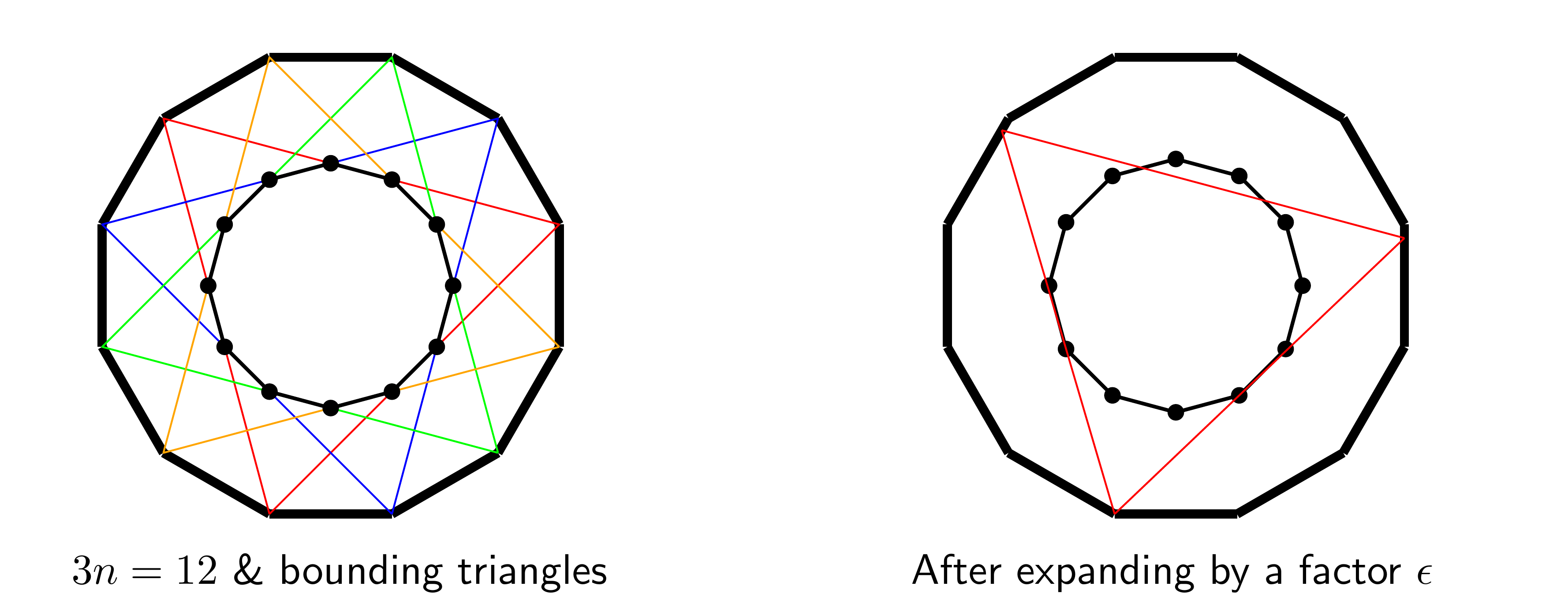}
\caption{Moitra's example with 12 vertices}\label{fig:moitra12}
\end{figure}

Let $S$ be any subset of the vertices of $V'$ of cardinality strictly smaller than $3n/2$. Since this means $S$ contains less than half of the vertices of $V'$, this means that the complement of $S$ contains a pair of adjacent vertices by the pigeonhole principle. Since one of the $\Delta^{w_i}_{W,V'}$ contains all vertices of $V'$ except for this pair, we conclude that the convex hull of $S$ is contained in this $\Delta^{w_i}_{W,V'}$. This means that any subset of less than $3n/2$ columns of $M'$ has nonnegative rank at most $3$, while $M'$ itself has nonnegative rank at least $4$. Note that this proof is analogous to that of Moitra, barring the fact that we can take any subset of cardinality strictly smaller than $3n/2$, rather than any subset of cardinality strictly smaller than $n$. \qed
\end{ex}

We have seen that there is no stabilization property on the topological boundary of matrices with given nonnegative rank. The reader might wonder if this is true more generally for the algebraic boundary as well? Despite Moitra's example (for the topological boundary), for $r=3$ the stabilization on the algebraic boundary is true. A matrix $M \in \mathbb{R}^{m \times n}$ not containing zeros lies on the algebraic boundary $\overline{\partial (\mathcal{M}_{m\times n}^3)}$ if and only if it has a size three factorization $AB$ with seven zeros in special positions. If $n >4$, then we can find a column $i_0$ of $B$ that does not contain any of these seven zeros. Let $M^{\widehat{i_0}}$ and $B^{\widehat{i_0}}$ be obtained from $M$ and $B$ by removing the $i_0$-th column. Then $M^{\widehat{i_0}}$ has the factorization $AB^{\widehat{i_0}}$ with seven zeros in special positions, and hence lies on $\overline{\partial (\mathcal{M}_{m\times n-1}^3)}$.
For grater $r$ we formulate the following conjecture for columns (it could be formulated for rows as well).

\begin{conjecture}\label{conjecture1}
For given $r \geq 3$ there exist $n_0 \in \mathbb{N}$, such that for all $n\geq n_0$ and for all matrices $M=(m_i)_{i=1,...,n}$ on the algebraic boundary $\overline{\partial (\mathcal{M}_{m\times n}^r)}$ there is  a column $1\leq i_0\leq n$ such that the truncated matrix $M^{\widehat{i_0}}=(m_i)_{i=1,...,n}^{i\neq i_0}$ lies on the algebraic boundary $\overline{\partial(\mathcal{M}_{m\times n-1}^r)}$.
\end{conjecture}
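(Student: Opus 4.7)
The strategy is to generalize, for arbitrary $r$, the short argument given just before Conjecture~\ref{conjecture1} for $r=3$. That argument rests on three ingredients: (i) every nontrivial irreducible component of $\overline{\partial (\mathcal{M}_{m\times n}^3)}$ is the Zariski closure of $\mu(\mathcal{A}\times\mathcal{B})$ for coordinate subspaces $\mathcal{A}\subseteq M_{m\times 3}$ and $\mathcal{B}\subseteq M_{3\times n}$ cut out by prescribed vanishings of entries; (ii) each such pattern touches only a bounded number of columns of $B$ (at most four in the $r=3$ case); and (iii) if $i_0$ is a column of $B$ untouched by the pattern, then $(A,B)\mapsto (A, B^{\widehat{i_0}})$ maps $\mathcal{A}\times\mathcal{B}$ into the analogous set for $m\times(n-1)$ matrices and commutes with Zariski closure.

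My plan is to first establish analogues of (i) and (ii) for general $r$. Imitating the dimension count of Proposition~\ref{prop:GLABishypersurf}, a nontrivial boundary component parametrized by a sparse factorization $AB$ with generic $\GL_r$-stabilizer equal to the diagonal torus should be cut out by exactly $r^2-r+1$ prescribed zeros distributed between $\mathcal{A}$ and $\mathcal{B}$; additional component types with larger generic stabilizers must also be handled, but in each case the total number of zeros is bounded by a function of $r$ alone. Let $N(r)$ be the resulting uniform bound on the number of columns of $\mathcal{B}$ carrying a prescribed zero, taken over all components, and set $n_0:=N(r)+1$. Given $M\in \overline{\partial(\mathcal{M}_{m\times n}^r)}$ with $n\geq n_0$, fix a component containing $M$ and a column $i_0$ of $B$ lying outside the zero-support of its pattern; then deletion of the $i_0$-th column of $M$ (and of a witnessing factorization) lands in the analogous orbit set for $m\times(n-1)$, and passing to Zariski closures yields $M^{\widehat{i_0}}\in \overline{\partial(\mathcal{M}_{m\times(n-1)}^r)}$.

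The main obstacle, as the paper itself flags in Section~\ref{section:conjectures}, is step (i): for $r\geq 4$ there is no analogue of Theorem~\ref{KRS:Thm5.1} classifying the irreducible components of $\overline{\partial(\mathcal{M}_{m\times n}^r)}$. The Moitra-type examples together with the polytope picture of Lemma~\ref{lm:nonnegativerankandsimplices} suggest that the combinatorics of degenerate nested simplices becomes substantially more intricate for larger $r$, and a priori not every boundary component need arise as the orbit closure of a sparse-factorization locus. A natural intermediate target that would suffice, and which sidesteps a full classification, is the following weakening: show that every $M\in\overline{\partial(\mathcal{M}_{m\times n}^r)}$ lies in the Zariski closure of some orbit $\GL_r\cdot(\mathcal{A}\times\mathcal{B})$ whose $\mathcal{B}$-factor has prescribed zeros in only finitely many columns, where the number of such columns depends only on $r$. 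Once such a uniform bound is in hand, the pigeonhole step above closes the argument; conversely, establishing this bound directly (for instance via a deformation argument to boundary matrices in the topological boundary, where Moitra-style zero-pattern factorizations do exist) seems to me the most productive line of attack.
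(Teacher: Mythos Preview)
The statement you are addressing is Conjecture~\ref{conjecture1}, which the paper explicitly leaves open: it appears in Section~\ref{section:conjectures} (``Open Problems''), and the paper supplies no proof for general $r$. The paragraph immediately preceding the conjecture verifies only the case $r=3$, using the component classification of Theorem~\ref{KRS:Thm5.1}. So there is no ``paper's own proof'' to compare your proposal against.

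Your proposal is accordingly not a proof but a proof sketch with a self-diagnosed gap, and the gap is real. Everything hinges on your step~(i): that each nontrivial irreducible component of $\overline{\partial(\mathcal{M}_{m\times n}^r)}$ arises as the closure of $\mu(\mathcal{A}\times\mathcal{B})$ for coordinate subspaces $\mathcal{A},\mathcal{B}$ given by a zero pattern whose column-support is bounded by a function of $r$ alone. For $r=3$ this is exactly the content of Theorem~\ref{KRS:Thm5.1}; for $r\geq 4$ no such classification is known, and the paper points out precisely this obstruction. Your dimension count heuristic (``$r^2-r+1$ zeros'' by analogy with Proposition~\ref{prop:GLABishypersurf}) presupposes that every component is an orbit closure of a sparse-factorization locus with diagonal-torus stabilizer, which is the very thing to be proved. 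The weakened intermediate target you propose --- showing that every boundary matrix lies in \emph{some} orbit closure with bounded $\mathcal{B}$-support --- is a sensible reformulation, but it is itself open and, as the Moitra-type and Krone examples in Section~\ref{section:conjectures} indicate, the combinatorics for $r\geq 4$ is genuinely more complicated. In short: your outline correctly identifies the architecture of the $r=3$ argument and the place where it breaks for larger $r$, but it does not close that gap, and neither does the paper.
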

In the construction of Moitra's example it was crucial that both the number of rows and the number of columns was let to tend to infinity. One might hope that the topological boundary stabilizes if the number of rows (or columns) is kept fixed. Unfortunately, not even in this restricted case, the stabilization of the topological boundary is true. Robert Krone has a family of matrices ($m=5$ and arbitrary $n$) of nonnegative rank at least $4$  such that removing any column of the matrix gives a matrix of nonnegative rank $3$~\cite{RK}.

It is not clear though that such a family of examples is constructible for arbitrary $m$. This question seems to be related to the question regarding the existence of so called "maximal configurations" in ~\cite[Section 5]{MSS}. For the $m=4$ case the maximal boundary configuration we managed to construct has $8$ points, so $n=8$. That is the following example.

\begin{ex} Let $W$ be a square, and orient its edges counterclockwise. For every vertex $w$ of $W$ and for every angle $\theta$, let $l_{w,\theta}$ be the line that is at an angle $\theta$ to the unique directed edge starting at $w$. For fixed $\theta$ with $0 \leq \theta \leq \pi/4$, let $V_{\theta}$ be the polygon cut out by the lines $l_{w,\theta},l_{w,\pi/2-\theta}$ with $w$ running over the vertices of $W$.

By construction, for any edge $e$ of $V$, any of the triangles $\Delta^e$ is one of the $\Delta^{w}$. The left side of Figure~\ref{fig:basicexample} shows the square $W$, the octagon $V_{\pi/8}$, and the triangles $\Delta^w,\Delta^e$ (some of which coincide for this $\theta$, but not in general).

Note that $V_{\theta}$ can have at most $8$ vertices, so any pair $(W,V_{\theta})$ can be obtained from some matrix in $M_{4,8}^3$.
Observe that $V_{\theta'}$ lies in the interior of $V_{\theta}$ for all $0 \leq \theta < \theta' \leq \pi/4$, meaning that there can be at most one $\theta$ for which the pair $(W,V_{\theta})$ corresponds to some $M \in \partial\mathcal{M}_{4,8}^3$. Moreover, such $\theta$ exists. This follows from the fact that for $\theta = 0$, we have $V_{\theta} = W$ and does not have nonnegative rank $3$, and for $\theta = \pi/4$, the space $V_{\theta}$ consists of a single point, and hence has nonnegative rank at most $3$.

By direct computation, one can show that $V_{\pi/8} \in \partial\mathcal{M}_{4,8}^3$. From here on, we simply write $V = V_{\pi/8}$. Again, have a look at the left part of Figure~\ref{fig:basicexample}. You can see from the picture that all bounding triangles are tight, and in particular, $V$ does not lie in the interior of any triangle between $V$ and $W$.

The vertices of $V$ are of two types, namely those lying on the angle bisectors of the vertices of $W$ and those lying on the perpendicular bisectors of the edges of $W$. We call vertices of the first type \emph{angular vertices} and vertices of the second type \emph{perpendicular vertices}.

We modify $V$ to $V'$ by moving the perpendicular vertices a distance $\epsilon$ outwards along the bisectors. We observe that any triangle containing $V'$ must also contain $V$. Since $V$ is only contained in the triangles $\Delta^w$ with $w$ running over the vertices of $W$, and since $\Delta^{w}$ does not contain the angular vertex across from it (as can be seen by looking at the red triangle on the right hand side of Figure~\ref{fig:basicexample}), this means that $V'$ is not contained in any triangle that is contained in $W$.

Suppose $\epsilon$ is sufficiently small. If one removes an angular vertex $v$, we see that all remaining vertices of $V'$ are contained in one of the triangle $\Delta^w_{V',W}$ where $w$ is the vertex of $W$ across from $v$, as is demonstrated by the red triangle on the right hand side of Figure~\ref{fig:basicexample}. In terms of matrices, if one removes any column corresponding to an angular vertex, the resulting matrix will have nonnegative rank $3$.

If one removes a perpendicular vertex $v$, things are slightly more tricky. The new polygon $V''$ will contain an extra edge $e$. By direct calculation, we can show that $\Delta^e_{V'',W}$ is contained in $W$ (and in fact, this is a tight fit). This can be seen by looking at the blue triangle on the right hand side of Figure~\ref{fig:basicexample}. So again, if one removes a column corresponding to a perpendicular vertex, the resulting matrix will have nonnegative rank $3$.

We conclude that the pair $(W,V')$ has nonnegative rank $4$, and that if one removes any column from the corresponding matrix, the result has nonnegative rank $3$.
\qed
\end{ex}

\begin{figure}[h]
\includegraphics[width=12cm]{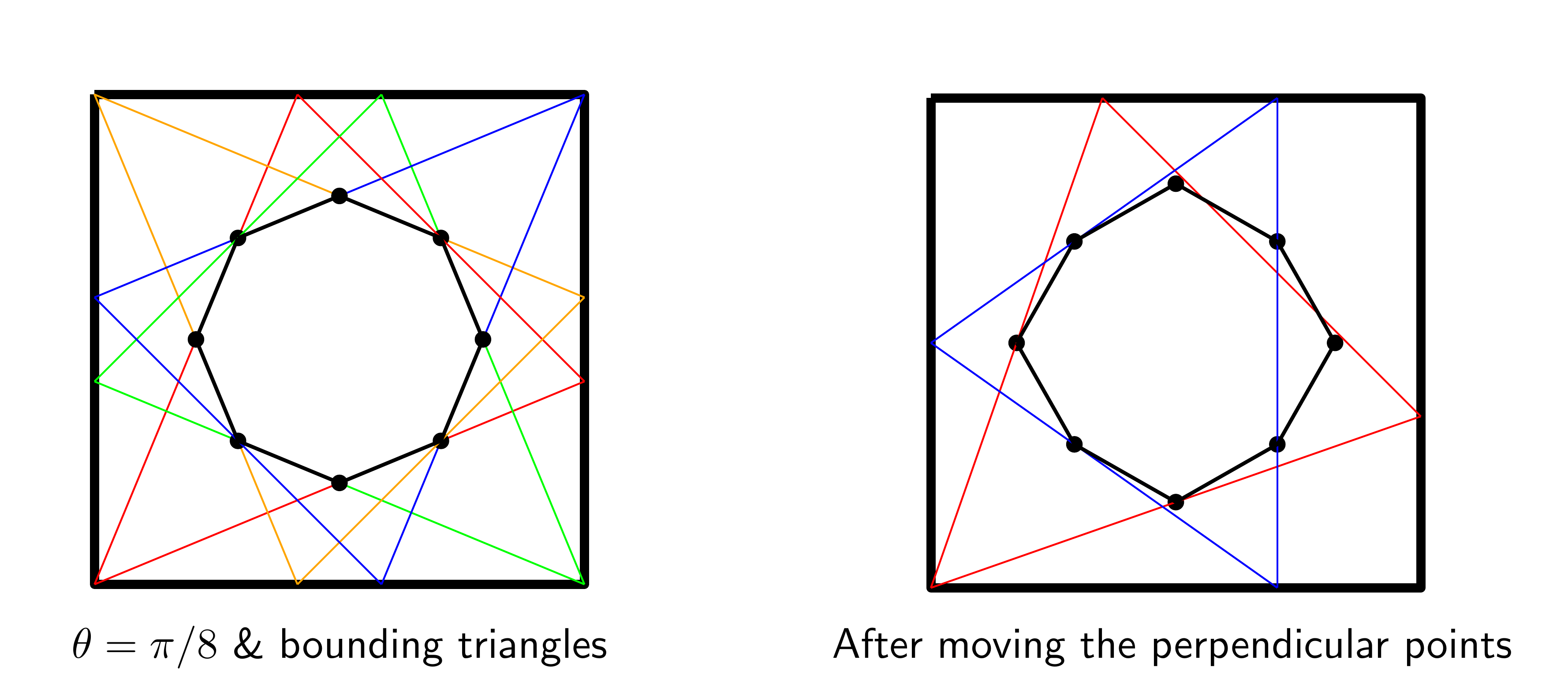}
\caption{The example, before and after moving points}\label{fig:basicexample}
\end{figure}

In the above example, we do not know the reason why $\Delta^e_{V'',W}$ is contained in $W$ (and why it is a tight fit). Numerical approximations suggest that a similar statement is true when $m = 8$ (and $n=16$), so some more general statement might be true.

If a similar statement is true when we replace $W$ by a regular $m$-gon (with $m$ not divisible by $3$), then we can generalize this example to a family of $m\times 2m$ examples similar to Moitra's family of examples, but with the property that the non-negative rank drops whenever one removes a single vertex (rather than whenever one removes a subset of the vertices of high cardinality). We can generalize the example even if such a property does not hold, but it would force us to modify $W$ to $W'$ as well as modifying $V$ to some $V'$.

We have seen that certain properties of the space of factorizations are influencing whether a configuration lies on the boundary.
A slightly milder approach to the stabilization property, would be to examine the local behavior of the space of factorizations.
A matrix on the boundary of the mixture model has only very restricted nonnegative factorizations (even only finitely many for $r=3$, see \cite[Lemma 3.7]{MSS}) and it might be true that stabilization holds locally for each particular factorization of the model. Of course by deleting a column (or a corresponding point) new factorizations may appear, so we can not say anything globally. We formulate this idea in the following conjecture for columns (it could be formulated for rows as well).

\begin{conjecture}\label{conjecture2}
For given $r\geq 3$ there exists an $ n_0 \in \mathbb{N}$, such that for all $n\geq n_0$ and for all nonnegative factorizations $M=AB$ where $M$ is on the topological boundary $\partial (\mathcal{M}_{m\times n}^r)$ there is a column $1\leq i_0\leq n$ and an $\epsilon >0$ such that in the $\epsilon$-neighborhood of the nonnegative factorization $AB^{i_0}$ all size $r$ factorizations of $M^{i_0}$ are obtained from factorizations of $M$ by removing the $i_0$-th column.
\end{conjecture}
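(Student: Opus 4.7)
My plan would be to translate the conjecture into a polyhedral question about local factorizations and then find a ``redundant'' column. Assume $M$ has rank exactly $r$; the rank-deficient case reduces to a smaller factorization problem. Then the fiber $\mu^{-1}(M)$ is a single $\GL_r$-orbit, so factorizations of $M$ close to $(A,B)$ in $M_{m\times r}^{+}\times M_{r\times n}^{+}$ are exactly those of the form $(Ag^{-1},gB)$ with $g\in \GL_r$ close to $I$, subject to $Ag^{-1}\geq 0$ and $gB\geq 0$. The same description applies to factorizations of $M^{\widehat{i_0}}$ near $(A,B^{\widehat{i_0}})$, with $gB$ replaced by $gB^{\widehat{i_0}}$; this requires $M^{\widehat{i_0}}$ to still have rank $r$, which excludes at most $r$ of the $n$ choices for $i_0$.

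Given such a $g$, the only possible lift of $(Ag^{-1},gB^{\widehat{i_0}})$ to a factorization of $M$ is obtained by inserting the column $gb_{i_0}$, because $A$ has full column rank. So the lift is nonnegative iff $gb_{i_0}\geq 0$, and the conjecture reduces to finding $i_0$ such that, in a small enough neighborhood of $I\in \GL_r$, the inequalities $Ag^{-1}\geq 0$ together with $gb_j\geq 0$ for $j\neq i_0$ already imply $gb_{i_0}\geq 0$. A clean sufficient condition is that $b_{i_0}$ lies in the conical hull of $\{b_j:j\neq i_0\}$, since then $gb_{i_0}$ is a nonnegative linear combination of the $gb_j$. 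In particular, any column with all strictly positive entries is automatically ``redundant'' in this sense.

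To secure the desired $n_0$, I would aim to bound the number of extreme rays of $\mathrm{Cone}(b_1,\ldots,b_n)\subset \mathbb{R}_{\geq 0}^r$ by a constant $N(r)$ depending only on $r$. The heuristic is dimensional: the preimage of the codimension-one boundary in $M_{m\times r}^+\times M_{r\times n}^+$ itself has codimension one, and the $r^2$-dimensional $\GL_r$-fiber forces only finitely many of the $mr+rn$ nonnegativity constraints to be active at a generic boundary factorization. This matches Theorem~\ref{KRS:Thm5.1} in the $r=3$ case, where a generic $B$ on a nontrivial boundary component has exactly three columns carrying a single zero and $n-3$ strictly positive columns, so $n_0=4$ suffices immediately. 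Combined with the at-most-$r$ columns needed to preserve $\mathrm{rank}(M)=r$, taking $n_0=N(r)+r+1$ would handle the generic case.

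The main obstacle is extending this bound from generic factorizations to all factorizations, and in parallel establishing $N(r)$ for $r\geq 4$. Non-generic factorizations can have substantially more active zeros in $B$, and one needs either a semicontinuity argument (degenerating to nearby generic factorizations while tracking a redundant column continuously) or, more satisfactorily, an explicit structure theorem for the irreducible components of $\overline{\partial \mathcal{M}_{m\times n}^r}$ generalizing Theorem~\ref{KRS:Thm5.1}. The latter is itself an open problem and is precisely the source of the ``maximal configuration'' difficulties discussed in the preceding example, so a proof of Conjecture~\ref{conjecture2} for $r\geq 4$ would likely proceed hand-in-hand with such a classification.
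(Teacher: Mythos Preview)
The statement you are attempting is labeled a \emph{conjecture} in the paper and is not proved there; so there is no full proof to compare your proposal against. What the paper does establish is one special case, stated immediately after Conjecture~\ref{conjecture2}: if the given factorization $M=AB$ has a column $b_{i_0}$ with all entries strictly positive (geometrically, a vertex of $V$ lying in the interior of the simplex $\Delta$), then by continuity the constraint $gb_{i_0}\geq 0$ is slack for all $g$ in a small neighborhood of $I$, and that column can be removed. Your $\GL_r$--fiber setup recovers exactly this argument, and your framework is in fact a bit more general, since the conical--hull criterion you isolate is a different (global) sufficient condition that the paper does not mention.

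One small wording issue: strict positivity of $b_{i_0}$ does not in general imply that $b_{i_0}$ lies in the conical hull of the remaining columns, so your ``In particular'' is misleading. Both are valid sufficient conditions for local redundancy of the $i_0$-th constraint, but neither implies the other; you should present them as two separate criteria rather than one as a special case of the other.

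Beyond the special case, your proposal is explicitly a sketch, and you correctly identify the genuine obstruction: bounding, uniformly in $m,n$, the number of columns of $B$ that fail both redundancy criteria. Your heuristic dimension count matches the $r=3$ picture of Theorem~\ref{KRS:Thm5.1}, but for $r\geq 4$ no analogue of that classification is known, and the paper itself ties the difficulty to the ``maximal configuration'' question from \cite{MSS}. So your final paragraph is an accurate assessment: the argument as written proves no more than the paper's special case, and completing it would require structural input about $\overline{\partial\mathcal{M}_{m\times n}^r}$ that is currently unavailable.
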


In the nonnegative rank $3$ case a matrix lies on the topological boundary if and only if all nonnegative factorizations have seven zeros in special positions (which are isolated points in the space of factorizations, see \cite[Lemma 3.7]{MSS}), whereas it lies on the algebraic boundary if and only if it has at least one factorization with seven zeros in special positions (there exists an isolated factorization). So in the nonnegative rank $3$ case the above conjecture is true and it is equivalent to Conjecture~\ref{conjecture1}. For higher $r$ the two conjectures are not equivalent, but Conjecture~\ref{conjecture1} implies Conjecture~\ref{conjecture2}.

For arbitrary $r$ we can prove this conjecture for a special case. Assume that $M$ lies on the topological boundary and it has a factorization such that not all vertices of the interior polytope $V$ lie on the boundary of $\Delta$. Let $v$ be one such vertex. We can remove the column corresponding to $v$ and choose $\epsilon$ less than the distance of $v$ to the closest facet of $\Delta$. Then $v$ does not lie on the boundary of $\Delta$ for any simplex $\Delta$ in an $\epsilon$ neighborhood of $\Delta$. In particular, $v$ does not influence whether $\Delta$ contains the interior polytope $V$ in this neighborhood, hence we can remove this vertex.

\appendix
\section{Gr\"obner Basis Computations}\label{section:appendix}

The \texttt{Macaulay2} code for the equivariant Gr\"obner basis computation is:
\begin{lstlisting}
m=8;
n=10;
R1=QQ[p_(1,1)..p_(4,6)];
--ideal for m=4 and n=6
S=QQ[a_(1,1)..a_(4,3),b_(1,1)..b_(3,6)];
M1=matrix{{0,a_(1,2),a_(1,3)},{0,a_(2,2),a_(2,3)},
   {a_(3,1),0,a_(3,3)},{a_(4,1),a_(4,2),0}};
M2=matrix{{0,b_(1,2),b_(1,3),b_(1,4),b_(1,5),b_(1,6)},
   {b_(2,1),0,b_(2,3),b_(2,4),b_(2,5),b_(2,6)},
   {b_(3,1),b_(3,2),0,b_(3,4),b_(3,5),b_(3,6)}};
M=M1*M2;
f=map(S,R1,flatten flatten entries M);
I1=kernel f;

--separate the degree 6 generators in the ideal

I1deg6={};
for i to numgens(I1)-1 do (
    if((degree(I1_i))#0==6) then I1deg6=append(I1deg6,I1_i);
)

--construct a new ideal that is the orbit of the
--original ideal under G

R2=QQ[p_(1,1)..p_(m,n)];

--construct G that fixes {1,2,3} and
--maps increasingly {4,5,6} into {4,...,n}

l=for i from 4 to n list i;
IncTemp=subsets(l,3);
Inc=for i to #IncTemp-1 list(join({1,2,3},IncTemp#i));

--construct substitute list for each element of G

substituteList=for i to #Inc-1 list
for j to numgens(R1)-1 list
    R1_j=>p_((baseName R1_j)#1#0,Inc#i#((baseName R1_j)#1#1-1));
substituteList

--construct all 4x4 minors of the mxn matrix

P=matrix(pack(n,flatten entries vars R2));
I2deg4=minors(4,P);

--construct degree 6 polynomials

I2deg6list=flatten for i to #I1deg6-1 list
for j to #substituteList-1 list
    sub(I1deg6#i,substituteList#j);
I2deg6list=unique(I2deg6list);
I2=I2deg4+ideal(I2deg6list);
sort gens gb I2 == sort gens I2
\end{lstlisting}

\end{document}